\begin{document}
\numberwithin{equation}{section}

\def\1#1{\overline{#1}}
\def\2#1{\widetilde{#1}}
\def\3#1{\widehat{#1}}
\def\4#1{\mathbb{#1}}
\def\5#1{\frak{#1}}
\def\6#1{{\mathcal{#1}}}

\newcommand{\de}{\partial}
\newcommand{\R}{\mathbb R}
\newcommand{\Ha}{\mathbb H}
\newcommand{\al}{\alpha}
\newcommand{\tr}{\widetilde{\rho}}
\newcommand{\tz}{\widetilde{\zeta}}
\newcommand{\tk}{\widetilde{C}}
\newcommand{\tv}{\widetilde{\varphi}}
\newcommand{\hv}{\hat{\varphi}}
\newcommand{\tu}{\tilde{u}}
\newcommand{\tF}{\tilde{F}}
\newcommand{\tG}{\tilde{G}}
\newcommand{\debar}{\overline{\de}}
\newcommand{\Z}{\mathbb Z}
\newcommand{\C}{\mathbb C}
\newcommand{\Po}{\mathbb P}
\newcommand{\zbar}{\overline{z}}
\newcommand{\G}{\mathcal{G}}
\newcommand{\So}{\mathcal{S}}
\newcommand{\Ko}{\mathcal{K}}
\newcommand{\U}{\mathcal{U}}
\newcommand{\B}{\mathbb B}
\newcommand{\oB}{\overline{\mathbb B}}
\newcommand{\Cur}{\mathcal D}
\newcommand{\Dis}{\mathcal Dis}
\newcommand{\Levi}{\mathcal L}
\newcommand{\SP}{\mathcal SP}
\newcommand{\Sp}{\mathcal Q}
\newcommand{\A}{\mathcal O^{k+\alpha}(\overline{\mathbb D},\C^n)}
\newcommand{\CA}{\mathcal C^{k+\alpha}(\de{\mathbb D},\C^n)}
\newcommand{\Ma}{\mathcal M}
\newcommand{\Ac}{\mathcal O^{k+\alpha}(\overline{\mathbb D},\C^{n}\times\C^{n-1})}
\newcommand{\Acc}{\mathcal O^{k-1+\alpha}(\overline{\mathbb D},\C)}
\newcommand{\Acr}{\mathcal O^{k+\alpha}(\overline{\mathbb D},\R^{n})}
\newcommand{\Co}{\mathcal C}
\newcommand{\Hol}{{\sf Hol}(\mathbb H, \mathbb C)}
\newcommand{\Aut}{{\sf Aut}(\mathbb D)}
\newcommand{\D}{\mathbb D}
\newcommand{\oD}{\overline{\mathbb D}}
\newcommand{\oX}{\overline{X}}
\newcommand{\loc}{L^1_{\rm{loc}}}
\newcommand{\la}{\langle}
\newcommand{\ra}{\rangle}
\newcommand{\thh}{\tilde{h}}
\newcommand{\N}{\mathbb N}
\newcommand{\kd}{\kappa_D}
\newcommand{\Hr}{\mathbb H}
\newcommand{\ps}{{\sf Psh}}
\newcommand{\Hess}{{\sf Hess}}
\newcommand{\subh}{{\sf subh}}
\newcommand{\harm}{{\sf harm}}
\newcommand{\ph}{{\sf Ph}}
\newcommand{\tl}{\tilde{\lambda}}
\newcommand{\gdot}{\stackrel{\cdot}{g}}
\newcommand{\gddot}{\stackrel{\cdot\cdot}{g}}
\newcommand{\fdot}{\stackrel{\cdot}{f}}
\newcommand{\fddot}{\stackrel{\cdot\cdot}{f}}
\def\v{\varphi}
\def\Re{{\sf Re}\,}
\def\Im{{\sf Im}\,}


\def\Label#1{\label{#1}}


\def\cn{{\C^n}}
\def\cnn{{\C^{n'}}}
\def\ocn{\2{\C^n}}
\def\ocnn{\2{\C^{n'}}}
\def\je{{\6J}}
\def\jep{{\6J}_{p,p'}}
\def\th{\tilde{h}}


\def\dist{{\rm dist}}
\def\const{{\rm const}}
\def\rk{{\rm rank\,}}
\def\id{{\sf id}}
\def\aut{{\sf aut}}
\def\Aut{{\sf Aut}}
\def\CR{{\rm CR}}
\def\GL{{\sf GL}}
\def\Re{{\sf Re}\,}
\def\Im{{\sf Im}\,}
\def\U{{\sf U}}

\def\la{\langle}
\def\ra{\rangle}

\emergencystretch15pt \frenchspacing

\newtheorem{theorem}{Theorem}[section]
\newtheorem{lemma}[theorem]{Lemma}
\newtheorem{proposition}[theorem]{Proposition}
\newtheorem{corollary}[theorem]{Corollary}

\theoremstyle{definition}
\newtheorem{definition}[theorem]{Definition}
\newtheorem{example}[theorem]{Example}

\theoremstyle{remark}
\newtheorem{remark}[theorem]{Remark}
\numberwithin{equation}{section}

\title[Semigroups versus evolution families]{Semigroups versus evolution families in the Loewner theory}
\author[F. Bracci]{Filippo Bracci}
\address{Dipartimento Di Matematica, Universit\`{a} Di Roma \textquotedblleft Tor
Vergata\textquotedblright, Via Della Ricerca Scientifica 1,
00133, Roma, Italy. } \email{fbracci@mat.uniroma2.it}
\author[M. D. Contreras]{Manuel D. Contreras}

\author[S. D\'{\i}az-Madrigal]{Santiago D\'{\i}az-Madrigal}
\address{Camino de los Descubrimientos, s/n\\
Departamento de Matem\'{a}tica Aplicada II\\
Escuela T\'{e}cnica Superior de Ingenieros\\
Universidad de Sevilla\\
Sevilla, 41092\\
Spain.}\email{contreras@us.es} \email{madrigal@us.es}
\date{\today }
\subjclass[2000]{Primary 30D05, 30C80; Secondary 34M15, 37L05}

\keywords{Loewner equations; non-autonomous vector fields;
iteration theory; evolution families; semigroups; commuting
functions}

\thanks{Partially supported by the \textit{Ministerio
de Ciencia e Innovaci\'on} and the European Union (FEDER),
project MTM2006-14449-C02-01, by \textit{La Consejer\'{\i}a de
Educaci\'{o}n y Ciencia de la Junta de Andaluc\'{\i}a} and by
the European Science Foundation Research Networking Programme
HCAA}

\begin{abstract}
We show that an evolution family of the unit disc is commuting
if and only if the associated Herglotz vector field has
separated variables. This is the case  if and only if the
evolution family   comes from a semigroup of holomorphic
self-maps of the disc.
\end{abstract}

\maketitle


\section{Introduction}

In 1923,  Loewner \cite{Loewner} introduced a differential
equation to study some extremal problems in the theory of
univalent functions, later developed mainly by Kufarev and
Pommerenke. Such equation is nowadays known as the radial
Loewner equation and it has been used to obtain many
fundamental results such as distortion theorems, growth
theorems, rotation theorems (see, {\sl e.g.}
\cite{Pommerenke}). In particular, Loewner's radial equation
was a key ingredient in the proof of Bieberbach's conjecture by
de Branges in 1985. In the last two decades, many
mathematicians  have considered and studied a variant of that
equation which is called the chordal Loewner differential
equation. Such a theory, especially the stochastic version of
it, turned out to be useful for solving famous open
conjectures. For instance, Lawler, Schramm and Werner solved
the Mandelbrot's conjecture about the Haussdorf dimension of
the Brownian frontier. For further details, we refer the reader
to \cite{Ma-Ro} and references therein.

Recently, the  authors  and  Gumenyuk  developed a theory which
unifies and extends both the radial and the chordal Loewner
equations \cite{BCM1}, \cite{Contreras-Diaz-Pavel}. Indeed,
this theory carries out to complex hyperbolic manifolds
\cite{BCM2}.

Loewner's theory studies the relationships among three notions:
Herglotz vector fields, evolution families and Loewner chains.
Roughly speaking, a Herglotz vector field $G(z,t)$ is a
Carath\`eodory vector field  such that $G(\cdot, t)$ is
semicomplete for almost every $t\geq 0$ (see Definition
\ref{DH}). An evolution family $(\v_{s,t})$ is a family of
holomorphic self-maps of the unit disc $\D$ satisfying some
algebraic relations in $s,t$ and some regularity hypotheses
(see Definition \ref{DE}). Finally, a Loewner chain $(f_t)$ is
a family of univalent mappings on the unit disc with increasing
ranges satisfying some regularity assumptions (see Definition
\ref{DL}).

The three objects are related by the following Loewner
differential equations:
\[
\frac{\de \v_{s,t}(z)}{\de t}=G(\v_{s,t}(z),t), \quad \frac{\de f_t(z)}{\de t}=-f_t'(z)G(z,t), \quad f_s(z)=f_t(\v_{s,t}(z)).
\]
In \cite{BCM1} it is proved that there is a one-to-one
correspondence between evolution families and Herglotz vector
fields, while in general Loewner chains are not uniquely
associated with Herglotz vector fields
\cite{Contreras-Diaz-Pavel}.

Examples of evolution families are given by semigroups of
holomorphic self-maps of the unit disc. Namely, if $(\Phi_t)$
is a semigroup (see Subsection \ref{semig}) then setting
$\v_{s,t}:=\Phi_{t-s}$ for $0\leq s\leq t<+\infty$ we obtain an
evolution family \cite[Example 3.4]{BCM1}. The associated
Herglotz vector field $G(z,t)$ does not depend on $t$ and it is
actually the infinitesimal generator of the semigroup. More
generally, if $\lambda:[0,+\infty)\to [0,+\infty)$ is an
increasing absolutely continuous function  then
$(\Phi_{\lambda(t)-\lambda(s)})$ is an evolution family whose
Herglotz vector field is {\sl splitting}, in the sense that
$G(z,t)=\stackrel{\bullet}{\lambda}(t) \tG(z)$ with $\tG$ being
the infinitesimal generator of the semigroup. Note that in such
cases the evolution family is commuting, namely every element
of the family commutes with each other.

The aim of the present paper is to characterize Herglotz vector
fields which are splitting  (see Definition \ref{splitting}).
The main result of this paper is the following

\begin{theorem}\label{main}
Let $G(z,t)$ be a Herglotz vector field and let $(\v_{s,t})$ be
its associated evolution family. Then $G(z,t)$ is splitting if
and only if $(\v_{s,t})$ is commuting.
\end{theorem}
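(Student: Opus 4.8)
The plan is to prove both implications through the correspondence between evolution families and Herglotz vector fields. For the easy direction, suppose $G(z,t)=\dot\lambda(t)\tG(z)$ is splitting with $\tG$ the infinitesimal generator of a semigroup $(\Phi_t)$. As recalled in the introduction, the associated evolution family is $\v_{s,t}=\Phi_{\lambda(t)-\lambda(s)}$, and since elements of a one-parameter semigroup commute, $(\v_{s,t})$ is commuting. More carefully, one should argue that the evolution family associated with a splitting Herglotz vector field is \emph{necessarily} of this form: solving $\frac{\de}{\de t}\v_{s,t}(z)=\dot\lambda(t)\tG(\v_{s,t}(z))$ with $\v_{s,s}=\id$, and comparing with the semigroup flow $\frac{d}{d\tau}\Phi_\tau=\tG\circ\Phi_\tau$, the uniqueness part of \cite{BCM1} forces $\v_{s,t}=\Phi_{\lambda(t)-\lambda(s)}$; commutativity is then immediate.

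For the hard direction, assume $(\v_{s,t})$ is commuting and aim to show $G$ is splitting. The natural first step is to exploit the algebraic structure: from the evolution family law $\v_{s,u}=\v_{t,u}\circ\v_{s,t}$ together with the commuting hypothesis, one expects to extract that the maps $\v_{s,t}$ all lie in a common one-parameter family. Concretely, I would fix the data at a reference time, say consider $\v_{0,t}$, and try to show these are of the form $\Phi_{\lambda(t)}$ for a fixed semigroup $(\Phi_\tau)$ and an increasing function $\lambda$. The key analytic tool is the classification of commuting holomorphic self-maps of $\D$ (via the Denjoy--Wolff point and the common linearization / Abel function): a family of commuting self-maps with no elliptic obstruction embeds, after conjugation, into a translation or dilation model, hence into a one-parameter semigroup. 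Differentiating $\v_{0,t}=\Phi_{\lambda(t)}$ in $t$ gives $\frac{\de}{\de t}\v_{0,t}(z)=\dot\lambda(t)\,\tG(\Phi_{\lambda(t)}(z))=\dot\lambda(t)\,\tG(\v_{0,t}(z))$, which is exactly $G(\v_{0,t}(z),t)=\dot\lambda(t)\tG(\v_{0,t}(z))$; since for each $z\in\D$ and a.e.\ $t$ the point $\v_{0,t}(z)$ ranges over enough of $\D$ (or by a further argument using $\v_{s,t}$ for all $s$), one concludes $G(w,t)=\dot\lambda(t)\tG(w)$ for a.e.\ $t$ and all $w$, i.e.\ $G$ is splitting.

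The main obstacle, which deserves the most care, is establishing that a \emph{commuting evolution family} — a priori only a measurable-in-$t$ cocycle of self-maps — genuinely embeds into a single one-parameter semigroup, including the degenerate/parabolic cases and the possibility that individual $\v_{s,t}$ have a common fixed point that is not attracting. One must handle separately the case where the common Denjoy--Wolff point is interior (linearization via Koenigs, so the $\v_{s,t}$ become commuting linear maps $w\mapsto c(s,t)w$, and $c(s,t)=c(0,t)/c(0,s)$ forces the splitting) versus the boundary case (conjugating to a half-plane where commuting maps become translations/affine maps, again reducing $G$ to a product). A second delicate point is the regularity: one needs $\lambda(t):=$ (the natural parameter reading off $\v_{0,t}$ in the semigroup) to be absolutely continuous, which should follow from the absolute continuity built into the definition of evolution family together with the injectivity of $\tau\mapsto\Phi_\tau$ near $\tau=0$. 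Once these structural facts are in place, the identification of $G$ as $\dot\lambda\,\tG$ and the verification that $\tG$ is an infinitesimal generator are routine.
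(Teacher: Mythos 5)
Your proposal has a genuine gap in both directions, and the root cause is the same: you implicitly assume that a splitting Herglotz vector field has \emph{real nonnegative} time factor, i.e.\ that the evolution family sits inside a single real one-parameter semigroup $(\Phi_\tau)_{\tau\ge 0}$. Definition \ref{splitting} allows $g\in L^d_{\rm loc}([0,+\infty),\C)$, and the paper's examples $G(z,t)=(1+it)(z-1)^2$ and $G(z,t)=-(t(1+i)+1)z(2+z)$ show that commuting evolution families need not be of the form $\Phi_{\lambda(t)-\lambda(s)}$ with $\lambda$ increasing; by Corollary 3.4 that form is equivalent to $g\ge 0$ a.e. So in the ``easy'' direction your comparison-of-flows argument ($\v_{s,t}=\Phi_{\lambda(t)-\lambda(s)}$, hence commuting) breaks down for complex $g$: what is actually needed is Proposition \ref{uno}, i.e.\ the explicit formula $\v_{s,t}=h^{-1}\bigl(e^{\tG'(\tau)(\lambda(t)-\lambda(s))}h\bigr)$ (resp.\ $h^{-1}(h+\lambda(t)-\lambda(s))$) obtained by exhibiting the affine Loewner chain $f_s=e^{-\tG'(\tau)\lambda(s)}h$ (resp.\ $h-\lambda(s)$) and invoking Theorem \ref{pav}(2); commutativity then follows because all the maps are conjugate, via one K\"onigs map $h$, to multiplications or translations by complex constants.

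In the hard direction the same error makes your target statement (``$\v_{0,t}=\Phi_{\lambda(t)}$ for a fixed semigroup and increasing $\lambda$'') false as stated, and even the weaker claim you really need — simultaneous linearization of all the $\v_{s,t}$ by one K\"onigs/Abel-type map — is exactly the hard point you leave unresolved: Behan/Cowen/Heins give a common Denjoy--Wolff point and abelian centralizers in the elliptic, hyperbolic and parabolic zero-step cases (Lemma \ref{centro}), but for boundary Denjoy--Wolff point with parabolic elements of positive hyperbolic step the ``conjugate to translations/affine maps'' step is not available without extra hypotheses; indeed the paper can carry out precisely this kind of rescaling/linearization argument only for reversing families that are elliptic or hyperbolic (Proposition \ref{rev-Gsplit}) or under $C^3$-regularity of $G$ at $\tau$ (Theorem \ref{revcom2}), and leaves the remaining parabolic case open for reversing families. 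The paper's actual proof of Theorem \ref{maindue} sidesteps all global model theory: it proves the a.e.\ differentiability $\lim_{h\to 0^+}(\v_{t,t+h}(z)-z)/h=G(z,t)$ (Proposition \ref{techdiff}), uses the Reich--Shoikhet product formula $\phi^t_r=\lim_n (\v_{t,t+r/n})^{\circ n}$ to transfer commutativity of the evolution family to commutativity of the semigroups generated by $G(\cdot,t)$ and $G(\cdot,s)$ for a.e.\ $s,t$, and then concludes by the one-dimensional bracket fact $[G(\cdot,t),G(\cdot,s)]\equiv 0\Rightarrow G(\cdot,t)=\lambda\, G(\cdot,s)$ (Lemma \ref{commutingbrac}). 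To make your outline into a proof you would either have to supply the missing simultaneous-linearization argument in the parabolic positive-step case (not known to work without extra regularity), or switch to an infinitesimal argument of the paper's type.
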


Such a result is proved in Theorems \ref{m1} and \ref{maindue}.
Moreover, we show in Proposition \ref{uno} that a Herglotz
vector field has an associated Loewner chain of a particular
affine form if and only if it is splitting. Also, in Section
\ref{split} we describe splitting Herglotz vector fields
according to the dynamical properties of related semigroups and
we provide their Berkson-Porta like decomposition.

Finally, in Section \ref{rever} we introduce the notion of {\sl
reversing} evolution family, a natural and weaker  notion of
commuting, and we show that reversing evolution family are
commuting in many cases (see Theorems \ref{revcom} and
\ref{revcom2}).

\section{Preliminaries}\label{prel}

\subsection{Iteration theory} Let $\D:=\{\zeta\in \C: |\zeta|<1\}$ be the unit disc
of $\C$. A holomorphic function $f:\D\to \D$ such that $f\neq
\id$ has at most one fixed point in $\D$. If $f$ has a fixed
point $\tau\in\D$, then $f$ is called {\sl elliptic} and such a
point is called the {\sl Denjoy-Wolff point} of $f$. In case
$f$ is not an elliptic automorphism the sequence of iterates
$\{f^{\circ n}\}$ converges uniformly on compacta to the
constant function $z\mapsto \tau$.

In case $f$ has no fixed points in $\D$ then there exists a
unique point $\tau\in \de \D$, called again the {\sl
Denjoy-Wolff point} of $f$, such that $\{f^{\circ n}\}$
converges uniformly on compacta to the constant function
$z\mapsto \tau$. Moreover, $\angle \lim_{z\to \tau}f(z)=\tau$
and $\angle \lim_{z\to \tau}f'(z)=\alpha_f$, with $\alpha_f\in
(0,1]$ (here, as customary, $\angle\lim_{z\to \tau}$ means
angular limit). The function $f$ is said {\sl hyperbolic} if
$\alpha_f<1$ and {\sl parabolic} if $\alpha_f=1$ (see, {\sl
e.g.} \cite{Abate}).

If $f$ is parabolic, it is said {\sl of zero parabolic step} if
for some---and hence any---$z\in \D$ it follows
\[
\lim_{n\to \infty}\omega (f^{\circ n}(z), f^{(\circ n+1)}(z))=0,
\]
where $\omega$ is the Poincar\'e distance of $\D$.

The following result about centralizers of holomorphic
self-maps of the disc is true in a more general context without
assuming injectivity, but here we only need in the following
simple form.

\begin{lemma}\label{centro}
Let $f:\D \to \D$ be a univalent function, $f\neq \id$. Let
$C(f):=\{g:\D \to \D: f\circ g=g\circ f\}$ be the centralizer.
Then
\begin{enumerate}
  \item If $f$ is a hyperbolic automorphism with distinct fixed points $\tau,\tau'\in \de \D$ then $C(f)$ is
  abelian and for all $g\in C(f)$ it follows that $g$ is a
  hyperbolic automorphism with fixed points $\tau, \tau'$.
  \item If $f$ is not an automorphism and it is elliptic or
  hyperbolic then $C(f)$ is abelian.
  \item If $f$ is parabolic of zero hyperbolic step then $C(f)$
  is abelian.
\end{enumerate}
\end{lemma}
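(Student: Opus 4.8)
The plan is to treat the three cases by reducing each to the classical structure theory of commuting holomorphic self-maps of $\D$, exploiting models (linearizations) in which the commutation relation becomes transparent. In every case the strategy is the same: find a conjugacy $\sigma$ (a holomorphic map, in general not onto) intertwining $f$ with a simple ``model'' map $M$ on a model domain $\Omega$, show that every $g\in C(f)$ is also intertwined by the same $\sigma$ with some model map $M_g$, and then observe that the model maps automatically commute because they live in an abelian group (affine maps of $\C$, or rotations of a disc). The only genuine subtlety is injectivity of $\sigma$ on the relevant set, so that the relation $\sigma\circ g = M_g\circ\sigma$ really does determine $g$ and lets the commutation of the $M_g$'s be pulled back to commutation of the $g$'s; here we use crucially that $f$ (hence $g$) is univalent.

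\textbf{Case (1): $f$ a hyperbolic automorphism.} Conjugating by a M\"obius transformation I may assume $\tau=1$, $\tau'=-1$, and that on the right half-plane model $f$ becomes $w\mapsto \lambda w$ for some $\lambda>0$, $\lambda\neq 1$. If $g\in C(f)$, then $g$ commutes with a hyperbolic automorphism having boundary fixed points $\pm1$; by the classical Denjoy--Wolff/Julia--Wolff--Carath\'eodory analysis of commuting maps (or directly: $g$ must fix the Denjoy--Wolff point $1$ of $f$ and, applying the same to $f^{-1}\in C(f)$, also fix $-1$), $g$ fixes both $1$ and $-1$. A holomorphic self-map of $\D$ fixing two distinct boundary points and commuting with $f$ must be, in the half-plane model, a map commuting with $w\mapsto\lambda w$ and fixing $0$ and $\infty$; writing its Taylor-type expansion one shows it is linear, $w\mapsto \mu w$ with $\mu>0$, hence a hyperbolic automorphism with fixed points $\pm1$. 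Since all such maps form the abelian group $\{w\mapsto\mu w:\mu>0\}$, $C(f)$ is abelian.

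\textbf{Cases (2) and (3): $f$ not an automorphism, elliptic or hyperbolic; or $f$ parabolic of zero hyperbolic step.} Here I invoke the existence of a univalent (one-to-one!) holomorphic intertwining map $\sigma:\D\to\Omega$ — the K\oe{}nigs function when $f$ is elliptic non-automorphism or hyperbolic non-automorphism, and the analogous Abel/K\oe{}nigs-type map in the parabolic zero-step case — conjugating $f$ to an affine model $M$ on $\Omega\subseteq\C$ (a dilation $w\mapsto\alpha_f w$ in the hyperbolic case with $0<\alpha_f<1$, a rotation-scaling $w\mapsto f'(\tau)w$ in the elliptic case, a translation $w\mapsto w+1$ in the parabolic zero-step case), with $\sigma$ injective. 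Given $g\in C(f)$: first argue that $g$ shares the Denjoy--Wolff point $\tau$ of $f$ (this is where zero hyperbolic step is needed in the parabolic case — it rules out the pathological commuting pairs with different Denjoy--Wolff points and guarantees the model is a genuine translation and the intertwining map is essentially unique up to an additive/multiplicative constant). Then from uniqueness of the model one deduces $\sigma\circ g=M_g\circ\sigma$ where $M_g$ is again an affine map of the same type (a constant multiple in the linear cases, $w\mapsto w+ c_g$ in the parabolic case). Now for $g,h\in C(f)$ we have $\sigma\circ(g\circ h)=M_g\circ M_h\circ\sigma=M_h\circ M_g\circ\sigma=\sigma\circ(h\circ g)$ since affine maps of $\C$ of the given special forms commute; injectivity of $\sigma$ then yields $g\circ h=h\circ g$.

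\textbf{The main obstacle} I anticipate is not any single computation but organizing the ``essential uniqueness of the intertwining model'' uniformly and correctly across the cases, and in particular justifying in case (3) that membership in $C(f)$ forces $g$ to have the same Denjoy--Wolff point as $f$ and to be conjugated by the \emph{same} $\sigma$ to a translation — the zero-hyperbolic-step hypothesis is exactly what makes this work and must be used at precisely this point. The univalence of $f$ (hence of each $g$, at least on the relevant part of $\D$) is what makes $\sigma$ injective, which is the other place care is needed; without it one could only intertwine into a possibly non-injective model and the final descent of commutation from the $M_g$'s to the $g$'s would fail. I would structure the write-up by first recalling the relevant linearization theorem in each case, then proving the Denjoy--Wolff-point-sharing lemma, then the ``same $\sigma$'' claim, and finally the two-line commutation argument.
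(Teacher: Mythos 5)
Your handling of cases (2) and (3) is essentially the paper's own route: the paper proves (3) exactly by the Abel-equation argument (a univalent $\sigma$ with $\sigma\circ f=\sigma+1$, unique up to an additive constant by \cite{CoDiPo}, so that $\sigma\circ g=\sigma+\lambda_g$ and commutativity is pulled back through the injective $\sigma$), and it remarks that the hyperbolic part of (2) can be done the same way via the uniqueness of the Valiron-type intertwining map \cite{Br1}, citing Cowen \cite{Cow} for the rest; your K\"onigs-linearization treatment of the elliptic case is a harmless variant of this. One caveat there: univalence of $f$ does \emph{not} imply univalence of $g\in C(f)$, so when you apply ``uniqueness of the model'' to $\sigma\circ g$ you need a uniqueness statement valid for non-univalent solutions (or you must restrict to univalent $g$, which is all the paper needs); your parenthetical ``hence of each $g$'' is not correct as stated.

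The genuine gap is in case (1). What you sketch there is precisely Heins' theorem, which the paper simply cites \cite{He2}, and the two steps you rely on do not stand as written. First, ``directly: $g$ must fix the Denjoy--Wolff point $1$ of $f$'' is unjustified: $g$ need not extend continuously to $\de\D$, so you cannot pass to the limit along $f^{\circ n}(z)\to 1$, and commuting with a hyperbolic automorphism does not force $g$ to share its Denjoy--Wolff point (take $g=f^{-1}$); to get that $1$ and $-1$ are angular-limit fixed points of $g$ you already need Behan's theorem \cite{Behan}, a Julia--Wolff--Carath\'eodory argument, or Heins itself. Second, and more seriously, ``writing its Taylor-type expansion one shows it is linear'' is not available: in the half-plane model the fixed points $0$ and $\infty$ are boundary points, where a self-map of $\Ha$ has no Taylor expansion, and the functional equation $G(\lambda w)=\lambda w$-equivariance, i.e. $G(\lambda w)=\lambda G(w)$, by itself only says that $G(w)/w$ is invariant under $w\mapsto\lambda w$ and hence descends to the quotient annulus; proving that this invariant function must be a positive constant (equivalently, that every self-map of $\Ha$ commuting with $w\mapsto\lambda w$ is $w\mapsto\mu w$ with $\mu>0$) is exactly the content of Heins' theorem and requires a genuine argument. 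The clean fix is to do what the paper does and quote \cite{He2} for (1), rather than attempt this two-line reduction.
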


\begin{proof}
(1) It is Heins' theorem \cite{He2}.

(2) It is due to Cowen \cite[Corollary 4.2]{Cow}. However, in
the hyperbolic case, one can get a simpler proof arguing along
the lines  of (3) below using the uniqueness of the
intertwining function for hyperbolic mappings proved in
\cite{Br1}.

(3) Let $\sigma:\D\to \C$ be univalent and such that $\sigma
\circ f= \sigma+1$. Such a map $\sigma$ exists and it is unique
in the sense that if $\tilde{\sigma}:\D\to \C$ is another
univalent map such that $\tilde{\sigma} \circ f=
\tilde{\sigma}+1$ then there exists $\lambda$ such that
$\tilde{\sigma}=\sigma+\lambda$ (\cite[Theorem 3.1]{CoDiPo}).
Let $g\in C(f)$ and write $\tilde{\sigma}:=\sigma \circ g$. It
follows that
\[
\tilde{\sigma}\circ f= \sigma \circ f \circ g= \sigma\circ g +1 =\tilde{\sigma}+1,
\]
hence $\sigma \circ g=\sigma+\lambda_g$ for some $\lambda_g \in
\C$. Now, if $g,g'\in C(f)$ then
\[
\sigma \circ g \circ g'=\sigma +\lambda_g +\lambda_{g'}=\sigma +\lambda_{g'}+\lambda_g=\sigma\circ g'\circ g.
\]
Being $\sigma$ univalent, it follows that $g\circ g'=g'\circ
g$, as wanted.
\end{proof}

Finally, we recall that a point $p\in \de \D$ is said to be a
{\sl boundary repelling fixed point} for a holomorphic map
$f:\D\to \D$ if $\lim_{r\to 1^+}f(rp)=p$ and $\lim_{r\to
1^+}f'(rp)=C$ with $C\in (1,+\infty)$.

\subsection{Semigroups}\label{semig} A semigroup $(\Phi_t)$ of holomorphic
self-maps of $\D$ is a continuous homomorphism between the
additive semigroup $(\R^+, +)$ of positive real numbers and the
semigroup $({\sf Hol}(\D,\D),\circ)$ of holomorphic self-maps
of $\D$ with respect to the composition, endowed with the
topology of uniform convergence on compacta.

By Berkson-Porta's theorem \cite{Berkson-Porta}, if $(\Phi_t)$
is a semigroup in ${\sf Hol}(\D,\D)$ then $t\mapsto \Phi_t(z)$
is analytic and there exists a unique holomorphic vector field
$F:\D\to \C$ such that $\frac{\de \Phi_t(z)}{\de
t}=F(\Phi_t(z))$. Such a vector field $F$ is semicomplete and
it is called the {\sl infinitesimal generator} of $(\Phi_t)$.
Conversely, any semicomplete holomorphic vector field in $\D$
generates a semigroup in ${\sf Hol}(\D,\D)$.

Let $F\not\equiv 0$ be an infinitesimal generator with
associated semigroup $(\Phi_t)$. Then there exists a unique
$\tau\in\oD$ and a unique $p:\D\to \C$ holomorphic with $\Re
p(z)\geq 0$ such that $F(z)=(z-\tau)(\overline{\tau}z-1)p(z)$.
Such a formula is the well renowned {\sl Berkson-Porta
formula}.

The point $\tau$ in the Berkson-Porta formula turns out to be
the common Denjoy-Wolff point of $\Phi_t$ for all $t\geq 0$.
Moreover, if $\tau\in \de\D$ it follows $\angle\lim_{z\to
\tau}\Phi_t'(z)=e^{\beta t}$ for some $\beta\leq 0$, where
$\beta=0$ if and only if $\Phi_t$ is parabolic for some--hence
any--$t>0$.

A {\sl boundary repelling fixed point} for a semigroup
$(\Phi_t)$ is a point $p\in \de \D$ which is a boundary
repelling fixed point for one---and hence any---$\Phi_t$, $t>0$
\cite{CoDiPo2}. Moreover, if $p\in \de\D$ is a boundary
repelling fixed point for $(\Phi_t)$, then there exists
$\beta>0$ such that $\lim_{r\to 1^+}\Phi_t(rp)=e^{t\beta}$ (see
\cite{Contreras-Diaz-Pommerenke:Scand}).

The proof of following proposition is in \cite{Berkson-Porta}
and \cite{Siskakis-tesis}.

\begin{proposition}
\label{Unival-VectorField} Let $(\Phi_{t})$ be a non-trivial
semigroup in $\mathbb{D}$ with infinitesimal generator $G$.
Then there exists a unique univalent function $h:\D\to\C$,
called the {\sl K\"onigs function} of $(\Phi_t)$ such that
\begin{enumerate}
\item If $(\Phi_{t})$ has Denjoy-Wolff point $\tau \in \mathbb{D}$ then
$h(\tau)=0$, $h'(\tau)=1$ and
$h(\Phi_t(z))=e^{G(\tau)t}h(z)$ for all $t\geq 0$.
Moreover, $h$ is the unique holomorphic function from
$\mathbb{D}$ into $\mathbb{C}$ such that
\begin{enumerate}
\item[(i)] $h^{\prime }(z)\neq 0,$ for every $z\in \mathbb{D},$
\item[(ii)] $h(\tau )=0$ and $h^{\prime }(\tau )=1,$
\item[(iii)] $h^{\prime }(z)G(z)=G^{\prime }(\tau )h(z),$ for
every $z\in \mathbb{D}.$
\end{enumerate}
\item If $(\Phi_{t})$ has Denjoy-Wolff point $\tau
\in \partial \mathbb{D}$ then $h(0)=0$ and
$h(\Phi_t(z))=h(z)+t$ for all $t\geq 0$. Moreover, $h$ is
the unique holomorphic function from $\mathbb{D}$ into
$\mathbb{C}$ such  that:
\begin{enumerate}
\item[(i)] $h(0)=0,$
\item[(ii)] $h^{\prime }(z)G(z)=1,$ for every $z\in \mathbb{D}.$
\end{enumerate}
\end{enumerate}
\end{proposition}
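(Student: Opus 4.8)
The plan is to treat separately the two cases — interior versus boundary Denjoy--Wolff point — and in each to produce $h$ by linearizing the time-one map $\Phi_1$, then propagate the normalization to the whole flow, and finally pass between the functional identity and the infinitesimal conditions (i)--(iii). The one genuinely nontrivial input will be the \emph{existence} of a linearizing map; everything else is elementary bookkeeping.

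\emph{Interior case.} After conjugating by a M\"obius automorphism of $\D$ we may assume $\tau=0$, so that $\Phi_t(0)=0$ and $\Phi_t'(0)=e^{\lambda t}$ with $\lambda:=G'(0)$; Berkson--Porta together with the minimum principle for the harmonic function $\Re p\ge 0$ gives $\Re\lambda\le 0$, with equality only in the degenerate case in which $(\Phi_t)$ is (conjugate to) a rotation group, which one disposes of by hand with $h=$ the identity. Assuming $\Re\lambda<0$, each $\Phi_t$ is univalent (being the time-$t$ map of the flow of the holomorphic field $G$, hence injective by uniqueness of solutions of ODEs), so K\"onigs' theorem applies to $\Phi_1$ and yields a univalent $h$ with $h(0)=0$, $h'(0)=1$ and $h\circ\Phi_1=e^{\lambda}h$; concretely $h=\lim_n e^{-\lambda n}\Phi_1^{\circ n}$, a locally uniform limit of univalent maps, whence $h$ is univalent and $h'$ is zero-free. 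Since $\Phi_t\circ\Phi_1=\Phi_1\circ\Phi_t$, the map $h_t:=e^{-\lambda t}(h\circ\Phi_t)$ also linearizes $\Phi_1$ and satisfies $h_t(0)=0$, $h_t'(0)=1$, hence $h_t=h$ by the uniqueness in K\"onigs' theorem; this is precisely $h(\Phi_t(z))=e^{\lambda t}h(z)$ for all $t\ge 0$. Differentiating in $t$ at $t=0$ gives $h'(z)G(z)=\lambda h(z)$, which is (iii), while (i) and (ii) are built in. Conversely, if a holomorphic $h$ satisfies (i)--(iii), then for fixed $z$ the function $u(t):=h(\Phi_t(z))$ solves $\dot u=\lambda u$, $u(0)=h(z)$, so $h\circ\Phi_t=e^{\lambda t}h$; together with the normalization (ii) this forces $h$ to be the K\"onigs linearization of $\Phi_1$, which is unique, and in particular $h$ is automatically univalent. (Undoing the initial conjugation and a harmless rescaling restores the stated normalization $h(\tau)=0$, $h'(\tau)=1$.)

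\emph{Boundary case.} Here Berkson--Porta gives $G(z)=(z-\tau)(\overline{\tau}z-1)p(z)$ with $\tau\in\de\D$ and $\Re p\ge 0$, and the minimum principle forces $p$ — hence $G$ — to be zero-free on $\D$ (otherwise $\Re p\equiv 0$, so $p$ is an imaginary constant and $G$ is still zero-free on $\D$; and $G\equiv 0$ is excluded by non-triviality). The classical linear-fractional model for semigroups with boundary Denjoy--Wolff point provides a univalent $h:\D\to\C$ with $h(\Phi_t(z))=h(z)+t$ for all $t\ge 0$; replacing $h$ by $h-h(0)$ we arrange $h(0)=0$, which is (i), and differentiating in $t$ at $t=0$ gives $h'(z)G(z)=1$, which is (ii). For uniqueness, if $h_1,h_2$ are holomorphic with $h_j(0)=0$ and $h_j'G\equiv 1$, then $(h_1-h_2)'G\equiv 0$; since $G$ is zero-free this gives $(h_1-h_2)'\equiv 0$, so $h_1\equiv h_2$; and $h$ is univalent because $h'=1/G$ is zero-free and the Abel identity $h(\Phi_t(z))=h(z)+t$ together with standard semigroup theory yields global injectivity.

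The only step I expect to be a real obstacle is the \emph{existence} of the linearizing map — K\"onigs' theorem in the interior case and the linear-fractional model in the boundary case — which I would simply quote from \cite{Berkson-Porta} and \cite{Siskakis-tesis}; the remaining points (propagating the normalization through the flow, converting between the functional and infinitesimal forms via the linear ODE satisfied by $t\mapsto h(\Phi_t(z))$, and the two uniqueness assertions) are routine. A minor point requiring care is the exceptional sub-case $\Re G'(\tau)=0$ with $\tau\in\D$, where $(\Phi_t)$ is conjugate to a rotation group and K\"onigs' theorem in its usual form does not apply; it is exactly the non-triviality hypothesis on $(\Phi_t)$ that excludes the wholly degenerate situation $G\equiv 0$.
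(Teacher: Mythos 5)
Your proposal is correct and takes essentially the same route as the paper, which in fact gives no proof of this proposition at all but simply refers to \cite{Berkson-Porta} and \cite{Siskakis-tesis} --- precisely the sources you quote for the K\"onigs/Abel linearization, the remaining steps (normalization along the flow, the ODE for $t\mapsto h(\Phi_t(z))$, and the two uniqueness arguments) being routine as you indicate. The only point worth flagging is that the exponent in the statement should read $e^{G'(\tau)t}$ rather than $e^{G(\tau)t}$ (a typo, since $G(\tau)=0$ at an interior Denjoy--Wolff point), and your argument yields exactly the corrected form.
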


\subsection{Loewner theory} The three main objects of the theory are Herglotz vector
fields, evolution families and Loewner chains. We give here the
actual general definitions from \cite{BCM1} and
\cite{Contreras-Diaz-Pavel} which include the classical  radial
and chordal cases.

\begin{definition}
\label{DH} Let $d\in [1,+\infty]$. A {\sl Herglotz
 vector field of order $d$} on the unit disc
$\mathbb{D}$ is a function
$G:\mathbb{D}\times\lbrack0,+\infty)\rightarrow \mathbb{C}$
with the following properties:

\begin{enumerate}
\item[H1.] For all $z\in\mathbb{D},$ the function $\lbrack
0,+\infty)\ni t\mapsto G(z,t)$ is measurable;

\item[H2.] For all $t\in\lbrack0,+\infty),$ the function $
\mathbb{D}\ni z\mapsto G(z,t)$ is holomorphic;

\item[H3.] For any compact set $K\subset\mathbb{D}$ and for all $T>0$ there
exists a non-negative function $k_{K,T}\in
L^{d}([0,T],\mathbb{R})$ such that
\[
|G(z,t)|\leq k_{K,T}(t)
\]
for all $z\in K$ and for almost every $t\in\lbrack0,T].$
\item[H4.] For almost every $t\in [0,+\infty)$ it follows
$G(\cdot, t)$ is an infinitesimal generator.
\end{enumerate}
\end{definition}

In \cite[Theorem 4.8]{BCM1} it is proved that any Herglotz
vector field $G(z,t)$ has a decomposition by means of a
Berkson-Porta like formula, namely,
$G(z,t)=(z-\tau(t))(\overline{\tau(t)}z-1)p(z,t)$, where
$\tau:[0,+\infty)\to \oD$ is a measurable function and
$p:\D\times [0,+\infty)\to \C$ has the property that for all
$z\in\mathbb{D},$ the function $\lbrack0,+\infty )\ni t \mapsto
p(z,t)\in\mathbb{C}$ belongs to
$L_{loc}^{d}([0,+\infty),\mathbb{C})$; for all
$t\in\lbrack0,+\infty),$ the function $\mathbb{D}\ni z \mapsto
p(z,t)\in\mathbb{C}$ is holomorphic; for all $z\in\mathbb{D}$
and for all $t\in\lbrack0,+\infty),$ we have $\Re p(z,t)\geq0.$
The data $(\tau(t), p(z,t))$ are called the {\sl Berkson-Porta
data} of $G(z,t)$ and they are essentially unique, in the sense
that $p(z,t)$ is unique up to a zero measure set in $t$ and
$\tau(t)$ is unique if $p(z,t)\not\equiv 0$.

\begin{definition}\label{splitting}
A Herglotz vector field $G(z,t)$ of order $d\in [1,+\infty]$ is
said to be  {\sl splitting} if there exists an infinitesimal
generator $\tG$ and a function $g\in L^d_{\sf
loc}([0,+\infty),\C)$ such that $G(z,t)=g(t)\tG(z)$ for all
$z\in\D$ and almost every $t\in [0,+\infty)$.
\end{definition}

Now we recall the definition of evolution families.

\begin{definition}\label{DE}
A family $(\varphi_{s,t})_{0\leq s\leq t<+\infty}$ of
holomorphic self-maps of the unit disc  is an {\sl evolution
family of order $d$} with $d\in [1,+\infty]$ if

\begin{enumerate}
\item[EF1.] $\varphi_{s,s}=id_{\mathbb{D}},$

\item[EF2.] $\varphi_{s,t}=\varphi_{u,t}\circ\varphi_{s,u}$ for all $0\leq
s\leq u\leq t<+\infty,$

\item[EF3.] for all $z\in\mathbb{D}$ and for all $T>0$ there exists a
non-negative function $k_{z,T}\in L^{d}([0,T],\mathbb{R})$
such that
\[
|\varphi_{s,u}(z)-\varphi_{s,t}(z)|\leq\int_{u}^{t}k_{z,T}(\xi)d\xi
\]
for all $0\leq s\leq u\leq t\leq T.$
\end{enumerate}
\end{definition}

The elements of evolution families are univalent
\cite[Corollary 6.3]{BCM1}.

In \cite[Theorem 1.1, Theorem 6.6]{BCM1} it is proved that
there is a one-to-one correspondence between evolution families
and Herglotz vector fields:

\begin{theorem}\label{LODE}
For any evolution family $(\v_{s,t})$ of order $d\geq 1$ in the
unit disc there exists a unique (up to changing on zero measure
set in $t$) Herglotz vector field $G(z,t)$ of order $d$ such
that for all $z\in \D$
\begin{equation}\label{main-eq}
\frac{\de \v_{s,t}(z)}{\de t}=G(\v_{s,t}(z),t) \quad \hbox{a.e.
$t\in [0,+\infty)$}.
\end{equation}
Conversely, for any Herglotz vector field $G(z,t)$ of order
$d\geq 1$
 in the unit disc
there exists a unique evolution family $(\v_{s,t})$ of order
$d$  such that \eqref{main-eq} is satisfied.

Moreover for each $t>0$ fixed
\begin{equation}\label{main-eq2}
\frac{\de \v_{s,t}(z)}{\de s}=-\v_{s,t}'(z)G(z,s)
\end{equation}
for almost every $s\in (0,t)$ and all $z\in \D$.
\end{theorem}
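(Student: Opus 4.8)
The plan is to prove the two existence assertions and the uniqueness assertion separately, and then deduce the ``moreover'' part by differentiating the algebraic relation EF2 in its first variable. Throughout, the working machinery is Carath\'eodory ODE theory together with Schwarz--Pick estimates and the Berkson--Porta description of infinitesimal generators. First I would record the needed regularity: $(s,t,z)\mapsto\v_{s,t}(z)$ is jointly continuous and, for $z$ in a compact subset of $\D$ and $s,t$ in a bounded interval, both $t\mapsto\v_{s,t}(z)$ and $s\mapsto\v_{s,t}(z)$ are locally absolutely continuous, uniformly in the remaining parameters. Continuity in $t$ and the absolute-continuity estimate in $t$ come directly from EF2 (in the form $\v_{s,t}=\v_{u,t}\circ\v_{s,u}$) together with the $L^d$ majorant in EF3; the regularity in $s$ then follows by combining EF2 with the fact that the maps $\v_{s,u}$ are self-maps of $\D$, hence $1$-Lipschitz for the Poincar\'e distance. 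In particular $\de_t\v_{s,t}(z)$ exists for almost every $t$.

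For the implication ``evolution family $\Rightarrow$ Herglotz vector field'', I would use EF2 as $\v_{s,t+h}(z)=\v_{t,t+h}(\v_{s,t}(z))$, so that the difference quotient $(\v_{s,t+h}(z)-\v_{s,t}(z))/h$ equals $(\v_{t,t+h}(w)-w)/h$ with $w=\v_{s,t}(z)$. The core of the argument is to produce a \emph{single} null set $E\subset[0,+\infty)$ such that for every $t\notin E$ the limit $G(w,t):=\lim_{h\to0^+}(\v_{t,t+h}(w)-w)/h$ exists for \emph{all} $w\in\D$, depends holomorphically on $w$, and is an infinitesimal generator. To obtain such an $E$ I would fix a countable dense set of data $(s,z)$, discard the corresponding null sets of exceptional $t$ furnished by the regularity step, and then pass from the dense set to all $w\in\D$ by a normal-families argument: EF3 makes the $w$-difference quotients locally uniformly bounded, a locally uniform limit exists along subsequences, it is holomorphic by Weierstrass, and it is forced (hence subsequence-independent) on the dense set. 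Then H1 holds because $G(z,\cdot)$ is a pointwise limit of measurable difference quotients, H2 is the holomorphy just obtained, H3 is the EF3 bound passed to the limit, and H4 holds because each $\v_{t,t+h}$ lies in ${\sf Hol}(\D,\D)$, so $(\v_{t,t+h}-\id)/h$ lies in the tangent cone to ${\sf Hol}(\D,\D)$ at $\id$, which by \cite{Berkson-Porta} is exactly the closed convex cone of infinitesimal generators. Finally $\de_t\v_{s,t}(z)=G(\v_{s,t}(z),t)$ a.e.\ $t$, which is \eqref{main-eq}.

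For the converse implication ``Herglotz vector field $\Rightarrow$ evolution family'', given $G$ I would, for each $(s,z)$, solve the Carath\'eodory problem $\dot x(t)=G(x(t),t)$, $x(s)=z$; existence, uniqueness, and continuous and holomorphic dependence on $(s,z)$ follow from H1--H3 by the standard Carath\'eodory theory and a Gronwall estimate, with holomorphy of $z\mapsto x(t;s,z)$ coming from Weierstrass applied to the (locally uniformly convergent) difference quotients. The key point is that the solution remains in $\D$ for all $t\ge s$: writing $G(z,t)=(z-\tau(t))(\overline{\tau(t)}z-1)p(z,t)$ with the Berkson--Porta data of $G$ discussed after Definition~\ref{DH} and using H4, the non-autonomous analogue of the Berkson--Porta semicompleteness estimate bounds the growth of the Poincar\'e distance of $x(t)$ from a fixed point and thus precludes blow-up in finite time. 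Setting $\v_{s,t}(z):=x(t;s,z)$, property EF1 is the initial condition, EF2 is uniqueness of solutions, and EF3 follows from the integral identity $\v_{s,t}(z)-\v_{s,u}(z)=\int_u^t G(\v_{s,\xi}(z),\xi)\,d\xi$ together with H3.

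Uniqueness of $G$ is then immediate from the second step, since for a.e.\ $t$ one necessarily has $G(z,t)=\lim_{h\to0^+}(\v_{t,t+h}(z)-z)/h$. For \eqref{main-eq2}, I would fix $t$ and use EF2 with middle index $s+h$, namely $\v_{s+h,t}\circ\v_{s,s+h}=\v_{s,t}$; differentiating this identity in $h$ at $h=0$ — which is legitimate for a.e.\ $s$ by the joint regularity established above, the chain rule, and the $t$-equation already proved — yields $\de_s\v_{s,t}(z)+\v_{s,t}'(z)G(z,s)=0$, i.e.\ \eqref{main-eq2}. I expect the main obstacle to be the second step: isolating one exceptional null set of times valid simultaneously for all $z\in\D$, and at the same time certifying that the limiting $G(\cdot,t)$ is genuinely an infinitesimal generator and not merely a holomorphic vector field; the remaining steps are careful but routine applications of Carath\'eodory ODE theory and Schwarz--Pick estimates.
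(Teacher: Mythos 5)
First, a point of comparison: this paper does not prove Theorem \ref{LODE} at all; it imports it from \cite{BCM1} (Theorems 1.1 and 6.6 there), so the only internal material close to your argument is Proposition \ref{techdiff}, whose normal-families construction of the a.e.\ limit of $(\v_{t,t+h}-\id)/h$ is precisely the device you propose for the direct implication. Measured against the proof in \cite{BCM1}, your outline follows essentially the same route: Schwarz--Pick contraction plus EF2/EF3 for regularity in both indices, difference quotients and a single null set for the existence of $G$, closedness of the cone of generators for H4 (here it is cleaner to note that $f-\id$ is itself an infinitesimal generator for every holomorphic self-map $f$ of $\D$, so $(\v_{t,t+h}-\id)/h$ lies in the cone, which is closed under locally uniform limits, rather than invoking a ``tangent cone'' wholesale from \cite{Berkson-Porta}), Carath\'eodory ODE theory for the converse, and a.e.\ differentiation of EF2 in the first index for \eqref{main-eq2}.

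Two steps, however, would not go through as you state them. (i) The heart of the converse is that maximal solutions of $\dot x=G(x,t)$, $x(s)=z$, never reach $\partial\D$ in finite time, and your mechanism --- controlling ``the Poincar\'e distance of $x(t)$ from a fixed point'' --- is not available: a Herglotz vector field has no common fixed point, the Berkson--Porta point $\tau(t)$ varies measurably and may lie on $\partial\D$, where the Poincar\'e distance is infinite; moreover H3 bounds $G$ only on compacta, so no hyperbolic-metric Gronwall argument can be run near the boundary. What does work (and is in substance what \cite{BCM1} does) is a Euclidean pointwise estimate valid for every generator: writing $G(z,t)=a(t)-\overline{a(t)}\,z^2+zQ(z,t)$ with $\Re Q\le 0$, one gets $\Re\bigl(\bar z\,G(z,t)\bigr)\le (1-|z|^2)\,|G(0,t)|$, hence $\frac{d}{dt}\bigl(1-|x(t)|^2\bigr)\ge -2\,(1-|x(t)|^2)\,k(t)$ with $k=k_{\{0\},T}$ from H3, and Gronwall gives $1-|x(t)|^2\ge (1-|z|^2)e^{-2\int_s^t k(\xi)d\xi}>0$; without some such estimate the existence half of the converse is unproved. (ii) For \eqref{main-eq2}, differentiating $h\mapsto \v_{s+h,t}(\v_{s,s+h}(z))$ ``by the chain rule'' is delicate: differentiability in the first index is known only for a.e.\ value of that index, and you would need it at a moving base point. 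The safe argument estimates the difference quotient $(\v_{s+h,t}(z)-\v_{s+h,t}(\v_{s,s+h}(z)))/h$ directly, using $\v_{s,s+h}(z)=z+hG(z,s)+o(h)$ for a.e.\ $s$ together with the locally uniform convergence of $\v_{s+h,t}$ and, via Cauchy estimates, of $\v_{s+h,t}'$ as $h\to0$. With these two points repaired, the rest of your plan is sound and coincides with the argument of \cite{BCM1}.
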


\begin{definition}
An evolution family $(\v_{s,t})$ is called  {\sl commuting}
  if $\v_{m,n}\circ \v_{s,t}=\v_{s,t}\circ \v_{m,n}$ for all
  $0\leq s\leq t<+\infty$ and $0\leq m\leq n<+\infty$
\end{definition}

Finally we recall the definition of Loewner chains

\begin{definition}\label{DL}
A family $(f_t)_{0\leq t<+\infty}$ of holomorphic maps of the
unit disc is a {\sl Loewner chain of order $d$}, with $d\in
[1,+\infty]$,  if

\begin{enumerate}
\item[LC1.]  $f_t:\D\to\C$ is univalent for all $t\geq 0$,

\item[LC2.] $f_s(\D)\subset f_t(\D)$ for all $0\leq s < t<+\infty,$

\item[LC3.] for any compact set $K\subset\mathbb{D}$ and  all $T>0$ there exists a
non-negative function $k_{K,T}\in L^{d}([0,T],\mathbb{R})$
such that
\[
|f_s(z)-f_t(z)|\leq\int_{s}^{t}k_{K,T}(\xi)d\xi
\]
for all $z\in K$ and all $0\leq s\leq t\leq T$.
\end{enumerate}
\end{definition}

In \cite[Theorem 1.3, Theorem 4.1]{Contreras-Diaz-Pavel} it is
proved

\begin{theorem}\label{pav}
(1) For any Loewner chain $(f_t)$ of order $d\in[1,+\infty]$,
let
$$
\varphi_{s,t}:= f_t^{-1}\circ f_s, \quad 0\le s\le t.
$$
Then $(\v_{s,t})$ is an evolution family of the same order~$d$.
Conversely, for any evolution family $(\v_{s,t})$ of
order~$d\in[1,+\infty]$, there exists a Loewner chain $(f_t)$
of  order~$d$ such that
\begin{equation*}
 f_t\circ\varphi_{s,t}=f_s,\quad 0\le s\le t.
\end{equation*}
(2) Moreover let $G(z,t)$ be the Herglotz vector field of order
$d\in[1,+\infty]$ associated with the evolution family
$(\varphi _{s,t}).$ Suppose that $(f_{t})$ is a family of
univalent functions in the unit disc such that
\begin{equation}\label{low-kuf}
\frac{\partial f_{s}(z)}{\partial s}=-G(z,s)f_{s}^{\prime }(z)\qquad \text{ for every }z\in \D\text{, a.e. }s\in
\lbrack 0,+\infty ).
\end{equation}
Then $(f_{t})$ is a Loewner chain of order $d$ associated with
the evolution family~$(\varphi _{s,t}).$
\end{theorem}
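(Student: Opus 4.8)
The plan is to prove (2) first, since the computation there produces the functional identity $f_t\circ\varphi_{s,t}=f_s$ which is also the substance of (1), and then to treat the two implications of (1) separately, using throughout the Carath\'eodory-type regularity of the objects (absolute continuity in $t$, holomorphy in $z$) that makes the chain rule applicable, as in \cite{BCM1}. \emph{Proof of (2).} Fix $s\ge 0$ and $z\in\D$ and set $g(t):=f_t(\varphi_{s,t}(z))$ for $t\ge s$. First I would check that $t\mapsto g(t)$ is locally absolutely continuous and compute, for a.e.\ $t$,
\[
g'(t)=\frac{\de f_t}{\de t}\Big|_{w=\varphi_{s,t}(z)}+f_t'(\varphi_{s,t}(z))\,\frac{\de\varphi_{s,t}(z)}{\de t}.
\]
Substituting \eqref{low-kuf} (evaluated at $w=\varphi_{s,t}(z)$) into the first summand and \eqref{main-eq} into the second, the two summands cancel, so $g'\equiv 0$ and hence $f_t(\varphi_{s,t}(z))=g(s)=f_s(z)$; that is, $f_t\circ\varphi_{s,t}=f_s$. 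Granting this, LC1 is the hypothesis, LC2 follows because $f_s(\D)=f_t(\varphi_{s,t}(\D))\subseteq f_t(\D)$, and for LC3 I would integrate \eqref{low-kuf} to get $f_s(z)-f_t(z)=\int_s^t G(z,\xi)f_\xi'(z)\,d\xi$, bound $|G(z,\xi)|\le k_{K,T}(\xi)$ on a compact $K\subset\D$ by H3, and bound $\sup_{z\in K}|f_\xi'(z)|$ uniformly in $\xi\in[0,T]$ by a Cauchy estimate (the $f_\xi$ being locally uniformly bounded on $[0,T]$, using $f_\xi=f_T\circ\varphi_{\xi,T}$, LC2, and continuity in $t$ so that the orbits stay in a compact subset of $\D$). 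Multiplying the two bounds gives the required $L^d$ majorant.

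\emph{Proof of (1), Loewner chain $\Rightarrow$ evolution family.} Set $\varphi_{s,t}:=f_t^{-1}\circ f_s$; by LC2 this is a holomorphic self-map of $\D$, univalent because $f_t$ is. EF1 and EF2 are immediate. For EF3, fix $z\in\D$ and $T>0$, put $w:=f_s(z)$, $a:=\varphi_{s,u}(z)=f_u^{-1}(w)$, $b:=\varphi_{s,t}(z)=f_t^{-1}(w)$, so that $f_u(a)=w=f_t(b)$ and hence
\[
f_t(b)-f_t(a)=f_u(a)-f_t(a).
\]
I would bound the right-hand side by $\int_u^t k_{K,T}(\xi)\,d\xi$ via LC3, with $K$ a fixed compact subset of $\D$ containing the orbit $\{\varphi_{s,\xi}(z):0\le s\le\xi\le T\}$ (precompact in $\D$ by a Schwarz--Pick argument), and the left-hand side from below by $c\,|b-a|$ with $c=c(z,T)>0$; the latter rests on a uniform distortion estimate $\inf\{|f_\xi'(\zeta)|:\zeta\in K,\ \xi\in[0,T]\}>0$, which I would establish by a normal-families argument from LC3 and univalence. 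This yields EF3 with the $L^d$ function $c^{-1}k_{K,T}$, so $(\varphi_{s,t})$ is an evolution family of the same order $d$.

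\emph{Proof of (1), evolution family $\Rightarrow$ existence of a Loewner chain.} Let $G(z,t)$ be the Herglotz vector field of $(\varphi_{s,t})$ provided by Theorem~\ref{LODE}. I would build the chain as a locally uniform limit
\[
f_t(z)=\lim_{n\to\infty}c_n\,\varphi_{t,n}(z),\qquad n\in\N,\ n\ge t,
\]
the normalizing constants $c_n$ coming from the hyperbolic distortion of $\varphi_{0,n}$ (reducing to $c_n=1/\varphi_{0,n}'(0)$ in the classically normalized radial case). The relation $\varphi_{s,n}=\varphi_{t,n}\circ\varphi_{s,t}$ from EF2 passes to the limit and gives $f_s=f_t\circ\varphi_{s,t}$, whence LC2; differentiating this identity in $s$ and using \eqref{main-eq2} shows $(f_t)$ satisfies \eqref{low-kuf}, so part (2) applies and yields a Loewner chain associated with $(\varphi_{s,t})$, of order $d$ once EF3 has been propagated through the limit. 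The main obstacle is precisely the analysis of this limit: one must show that $(c_n\varphi_{t,n})_n$ is a normal family with a single limit point (a monotonicity/Cauchy argument on the nested domains $c_n\varphi_{t,n}(\D)$), that the limit is non-constant---equivalently, that the ``Loewner range'' $\bigcup_t f_t(\D)$ is non-degenerate---so that by Hurwitz the limit is univalent, and that the order of the resulting chain is exactly $d$; this is where the real work lies.
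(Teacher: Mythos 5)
You should first note that the paper offers no proof of this theorem at all: it is quoted verbatim from \cite[Theorems 1.3 and 4.1]{Contreras-Diaz-Pavel}, so the only meaningful comparison is with the proof given there. Measured against that, your part (2) and the direction ``Loewner chain $\Rightarrow$ evolution family'' are sound in outline and follow the standard route: the cancellation $\frac{d}{dt}\bigl(f_t(\v_{s,t}(z))\bigr)=0$ giving $f_t\circ\v_{s,t}=f_s$, and compactness/distortion estimates (joint continuity of $(s,t,z)\mapsto\v_{s,t}(z)$ to keep orbits in a compact subset of $\D$, a uniform positive lower bound for the two-point quotient $(f_\xi(w)-f_\xi(z))/(w-z)$ on a compact set of parameters) to convert LC3 into EF3 and back. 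What still needs care there is only bookkeeping: making the exceptional null set in \eqref{low-kuf} independent of $z$ (countable dense set plus normality), and justifying the local absolute continuity of $t\mapsto f_t(\v_{s,t}(z))$ before invoking the chain rule.

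The genuine gap is the converse in (1) --- the existence of a Loewner chain for an \emph{arbitrary} evolution family --- which is exactly the substance of \cite[Theorem 1.3]{Contreras-Diaz-Pavel} and exactly the step you leave open (``this is where the real work lies''). Worse, the specific recipe you propose, $f_t=\lim_n c_n\,\v_{t,n}$ with \emph{scalar} constants $c_n$, fails outside the classical elliptic/radial situation: when the family has no common interior fixed point (e.g. $\v_{s,t}=\Phi_{t-s}$ for a hyperbolic or parabolic semigroup), the maps $\v_{t,n}$ converge to a boundary constant, and multiplying by scalars yields either a constant limit or divergence; one must recentre with affine or M\"obius normalizations built from $\v_{0,n}(0)$ and $\v_{0,n}'(0)$, as is done in \cite{Contreras-Diaz-Pavel}. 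Even with the correct normalization one must still prove that the whole sequence (not merely subsequences) converges, that the limit is univalent and nonconstant (a quantitative Koebe-type lower bound, not just Hurwitz), that $f_s=f_t\circ\v_{s,t}$ survives the limit, and that LC3 holds with the same order $d$; none of this is supplied. Note also that your shortcut of ``differentiating $f_s=f_t\circ\v_{s,t}$ in $s$ and applying part (2)'' presupposes local absolute continuity of $s\mapsto f_s(z)$, which for the limit family is itself something to be proved. As it stands, the proposal proves the easy half and reduces the hard half to the very statement being proved.
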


We remark that, although we never use this fact in the present
paper, given any Loewner chain $(f_t)$ there exists a Herglotz
vector field such that \eqref{low-kuf} is satisfied
\cite[Theorem 4.1]{Contreras-Diaz-Pavel}.

Throughout the paper, whenever not explicitly needed, in the
statements  we simply write evolution families, Herglotz vector
fields and Loewner chains without mentioning the order.

\section{Splitting Herglotz vector fields}\label{split}

Evolution families associated with Herglotz vector fields are
of ``semigroups type'' as explained here:

\begin{proposition}\label{uno}
Let  $G(z,t)=g(t)\tilde{G}(z)$ be a splitting Herglotz vector
field. Let $(\v_{s,t})$ be the evolution family associated with
$G(z,t)$. Let $(\Phi_t)$ be the semigroup associated with
$\tilde{G}$ whose Denjoy-Wolff point is $\tau\in \oD$ and let
$h$ be the K\"onigs function of $\tilde{G}$.
 Set
\[
\lambda(t):=\int_0^t
  g(\xi)d\xi.
\]
\begin{enumerate}
  \item If $\tau\in  \D$ then
\begin{itemize}
  \item $\v_{s,t}(z)=h^{-1}(e^{\tG'(\tau)[\lambda(t)-\lambda(s)]}h(z))$,
  \item there exists a Loewner chain associated with $(\v_{s,t})$ of the form
  $f_s(z)=e^{-\tG'(\tau)\lambda(s)}h(z)$.
\end{itemize}
 \item If $\tau\in\de\D$  then
 \begin{itemize}
  \item $\v_{s,t}(z)=h^{-1}(h(z)+\lambda(t)-\lambda(s))$,
  \item there exists a Loewner chain associated with $(\v_{s,t})$ of the form  $f_s(z)=h(z)-\lambda(s)$.
\end{itemize}
\end{enumerate}
\end{proposition}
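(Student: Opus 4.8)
The plan is to verify that the explicit candidates for $\v_{s,t}$ satisfy the Loewner ODE~\eqref{main-eq}, and then invoke the uniqueness part of Theorem~\ref{LODE}; after that, verify that the candidate chains satisfy~\eqref{low-kuf} and apply Theorem~\ref{pav}(2).

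First I would treat the interior case $\tau\in\D$. Recall from Proposition~\ref{Unival-VectorField}(1) that the K\"onigs function $h$ of $\tG$ satisfies $h'(z)\tG(z)=\tG'(\tau)h(z)$, equivalently $\tG(z)=\tG'(\tau)h(z)/h'(z)$, and that $h$ is univalent with $h'\ne 0$, so $h^{-1}$ is holomorphic on $h(\D)$. Define $\psi_{s,t}(z):=h^{-1}\!\big(e^{\tG'(\tau)[\lambda(t)-\lambda(s)]}h(z)\big)$. One checks directly that $\psi_{s,s}=\id$ and $\psi_{s,t}=\psi_{u,t}\circ\psi_{s,u}$ (this is just additivity of $\lambda(t)-\lambda(s)$ in the exponent), so EF1--EF2 hold; EF3 follows from local absolute continuity of $\lambda$ together with the chain rule and the fact that $h,h^{-1}$ are Lipschitz on compacta. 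Now differentiate in $t$: with $w:=e^{\tG'(\tau)[\lambda(t)-\lambda(s)]}h(z)$ one gets
\[
\frac{\de\psi_{s,t}(z)}{\de t}=(h^{-1})'(w)\cdot \tG'(\tau)\gdot\lambda(t)\, w
= g(t)\,\frac{\tG'(\tau)\,h(\psi_{s,t}(z))}{h'(\psi_{s,t}(z))}
= g(t)\,\tG(\psi_{s,t}(z)),
\]
for a.e.\ $t$, using $\gdot\lambda=g$ a.e., $h(\psi_{s,t}(z))=w$, and $(h^{-1})'(w)=1/h'(\psi_{s,t}(z))$. This is exactly~\eqref{main-eq} for $G(z,t)=g(t)\tG(z)$, so by the uniqueness in Theorem~\ref{LODE}, $\v_{s,t}=\psi_{s,t}$. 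For the Loewner chain, set $f_s(z):=e^{-\tG'(\tau)\lambda(s)}h(z)$; each $f_s$ is univalent (a nonzero multiple of $h$), and
\[
\frac{\de f_s(z)}{\de s}=-\tG'(\tau)g(s)\,e^{-\tG'(\tau)\lambda(s)}h(z)
=-g(s)\,\frac{\tG'(\tau)h(z)}{h'(z)}\,f_s'(z)
=-G(z,s)f_s'(z),
\]
which is~\eqref{low-kuf}; hence $(f_s)$ is a Loewner chain associated with $(\v_{s,t})$ by Theorem~\ref{pav}(2).

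The boundary case $\tau\in\de\D$ is entirely parallel, using instead Proposition~\ref{Unival-VectorField}(2): $h$ is univalent with $h'(z)\tG(z)=1$, i.e.\ $\tG(z)=1/h'(z)$. Setting $\psi_{s,t}(z):=h^{-1}(h(z)+\lambda(t)-\lambda(s))$ one checks EF1--EF3 as before, and
\[
\frac{\de\psi_{s,t}(z)}{\de t}=(h^{-1})'\!\big(h(z)+\lambda(t)-\lambda(s)\big)\,g(t)
=\frac{g(t)}{h'(\psi_{s,t}(z))}=g(t)\,\tG(\psi_{s,t}(z)),
\]
so $\v_{s,t}=\psi_{s,t}$ by uniqueness; and $f_s(z):=h(z)-\lambda(s)$ satisfies $\de_s f_s(z)=-g(s)=-g(s)h'(z)\tG(z)=-G(z,s)f_s'(z)$, giving the claimed chain via Theorem~\ref{pav}(2). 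The one genuinely delicate point is making sure that $h(z)+\lambda(t)-\lambda(s)$ (resp.\ $e^{\tG'(\tau)[\lambda(t)-\lambda(s)]}h(z)$) remains in the image $h(\D)$ for all $0\le s\le t$, so that $h^{-1}$ may legitimately be applied; this is where one uses that $(\Phi_t)$ is a \emph{semigroup} — by Proposition~\ref{Unival-VectorField}, $h(\D)+t\subset h(\D)$ for all $t\ge 0$ in the boundary case (resp.\ $e^{\tG'(\tau)t}h(\D)\subset h(\D)$ in the interior case, since $\Re\tG'(\tau)\le 0$), and $\lambda(t)-\lambda(s)\ge 0$ because $g\ge 0$ forces $\lambda$ nondecreasing; combined with the open mapping theorem this keeps everything well-defined. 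That verification, rather than the differentiation, is the main obstacle, but it is short.
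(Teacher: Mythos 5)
Your verification of the Loewner chains is exactly the paper's argument: $f_s(z)=e^{-\tG'(\tau)\lambda(s)}h(z)$, resp.\ $f_s(z)=h(z)-\lambda(s)$, is univalent, satisfies \eqref{low-kuf}, and is therefore a Loewner chain associated with $(\v_{s,t})$ by Theorem \ref{pav}(2). The gap is in the other half of your plan, where you define $\psi_{s,t}(z)=h^{-1}\big(e^{\tG'(\tau)[\lambda(t)-\lambda(s)]}h(z)\big)$ (resp.\ $h^{-1}(h(z)+\lambda(t)-\lambda(s))$) and invoke uniqueness in Theorem \ref{LODE}. For that you must first know that the argument of $h^{-1}$ stays in $h(\D)$, and your justification of this fails: Definition \ref{splitting} allows $g\in L^{d}_{\rm loc}([0,+\infty),\C)$ to be \emph{complex-valued}, so $\lambda(t)-\lambda(s)$ need not be a nonnegative real number (the paper's own examples have $g(t)=1+it$ and $g(t)=t(1+i)+1$). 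The inclusions you quote from Proposition \ref{Unival-VectorField}, namely $h(\D)+r\subseteq h(\D)$ and $e^{\tG'(\tau)r}h(\D)\subseteq h(\D)$, hold only for real $r\geq 0$; for the complex increments occurring here the inclusion is genuinely nontrivial (it encodes the requirement that $g(t)\tG$ is again an infinitesimal generator for a.e.\ $t$), and, as the Remark following Proposition \ref{uno} makes explicit, it is a \emph{consequence} of the proposition, not an input one may assume. So the ``short'' well-definedness verification you defer is in fact the crux, and as written it is circular for general splitting fields.

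The repair is to run the argument in the paper's order, which you essentially already have in hand: prove the chain statement first, then deduce the evolution-family formula from it. Once Theorem \ref{pav}(2) gives that $(f_s)$ is a Loewner chain associated with $(\v_{s,t})$, one gets $f_s(\D)\subseteq f_t(\D)$ and $f_t\circ\v_{s,t}=f_s$ for $0\le s\le t$, hence
\[
\v_{s,t}=f_t^{-1}\circ f_s=h^{-1}\big(e^{\tG'(\tau)[\lambda(t)-\lambda(s)]}h(\cdot)\big),
\quad\text{resp.}\quad
\v_{s,t}=h^{-1}\big(h(\cdot)+\lambda(t)-\lambda(s)\big),
\]
with the containment needed to apply $h^{-1}$ supplied automatically by LC2. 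Your route through the uniqueness in Theorem \ref{LODE} would only become legitimate after an independent proof that $h(\D)$ is invariant under the complex dilations $e^{\tG'(\tau)(\lambda(t)-\lambda(s))}$, resp.\ translations by $\lambda(t)-\lambda(s)$, which is precisely the point your argument does not establish.
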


\begin{proof} Assume first that $\tau\in \D$. Set
\[
f_s(z):=e^{-\tG'(\tau)\lambda(s)} h(z).
\]
Recall that $\tG(z)=\tG'(\tau)\frac{h(z)}{h'(z)}$. Then for all
$z\in \D$ and almost every $s\in [0,+\infty)$ it follows
\begin{equation*}
\begin{split}
\frac{\de f_s(z)}{\de s}&=-\tG'(\tau)\stackrel{\bullet}{ \lambda}(s)e^{-\tG'(\tau)\lambda(s)}h(z)
=-\tG'(\tau)g(s)e^{-\tG'(\tau)\lambda(s)}h(z)
\\&=-g(s)e^{-\tG'(\tau)\lambda(s)}h'(z)\tG(z)=-G(z,s)f_s'(z).
\end{split}
\end{equation*}
Hence $\{f_s(z)\}$ is a family of univalent maps in the unit
disc which satisfies
\[
\frac{\de f_s(z)}{\de s}=-G(z,s)f'_s(z) \quad \hbox{for all $z\in\D$, a.e. $s\in [0,+\infty)$}.
\]
By Theorem \ref{pav}.(2) it follows that $\{f_s\}$ is a Loewner
chain of order $d$ associated with $G(t,z)$. In particular
$f_s(\D)\subseteq f_t(\D)$ for all $0\leq s\leq t<+\infty$ and
$\v_{s,t}(z)=f_t^{-1}\circ f_s(z)$, proving the statement.

Assume now $\tau\in\de\D$. Let
\[
f_s(z):=h(z)-\lambda(s).
\]
Recall that in this case $\tG(z)=\frac{1}{h'(z)}$. Thus, for
all $z\in \D$ and almost every $s\in [0,+\infty)$, it follows
\[
\frac{\de f_s(z)}{\de s}=-\stackrel{\bullet}{\lambda}(s)=-g(s)\tG(z)h'(z)=-G(z,s)f_s'(z),
\]
and as before we can conclude by Theorem \ref{pav}.(2).
\end{proof}

\begin{remark}
Assuming the notations and hypotheses of Proposition \ref{uno},
as a result of the statement, it follows that
$e^{\tG'(\tau)[\lambda(t)-\lambda(s)]}h(\D)\subseteq h(\D)$ in
case $\tau\in \D$ and that
$h(\D)+(\lambda(t)-\lambda(s))\subseteq h(\D)$ in case
$\tau\in\de\D$, for all $0\leq s\leq t<+\infty$ .
\end{remark}

\begin{remark}
Note that if a Herglotz vector field $G(z,t)$ has an associated
Loewner chain of the form as in Proposition \ref{uno} then from
\eqref{low-kuf} it follows at once that $G(z,t)$ is splitting.
\end{remark}

As an immediate consequence of Proposition \ref{uno} we have
part of Theorem \ref{main}

\begin{theorem}\label{m1}
Let $(\v_{s,t})$ be an evolution family with associated
Herglotz vector field $G(z,t)$. If $G(z,t)$ is splitting then
$(\v_{s,t})$ is commuting.
\end{theorem}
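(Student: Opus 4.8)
The plan is to use the explicit description of $(\v_{s,t})$ given in Proposition \ref{uno} and reduce commutativity to a computation in the coordinate provided by the K\"onigs function $h$. Writing $G(z,t)=g(t)\tG(z)$ and $\lambda(t)=\int_0^t g(\xi)\,d\xi$, Proposition \ref{uno} tells us that in the case $\tau\in\D$ we have $\v_{s,t}=h^{-1}\circ M_{\lambda(t)-\lambda(s)}\circ h$, where $M_c$ denotes multiplication by $e^{\tG'(\tau)c}$, and in the case $\tau\in\de\D$ we have $\v_{s,t}=h^{-1}\circ T_{\lambda(t)-\lambda(s)}\circ h$, where $T_c$ denotes translation by $c$. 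Since conjugation by the single fixed univalent map $h$ is a group-type operation, two elements $\v_{s,t}$ and $\v_{m,n}$ commute if and only if the corresponding model maps commute; but multiplications by scalars commute with each other, and translations commute with each other. Hence $\v_{m,n}\circ\v_{s,t}=\v_{s,t}\circ\v_{m,n}$ for all admissible indices, which is precisely the definition of a commuting evolution family.

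The steps I would carry out are: first, invoke Proposition \ref{uno} to record the explicit form of $\v_{s,t}$ in the two cases $\tau\in\D$ and $\tau\in\de\D$; second, observe that $h:\D\to h(\D)$ is a biholomorphism, so that for any two maps $\psi_1,\psi_2$ of the form $h^{-1}\circ A_i\circ h$ one has $\psi_1\circ\psi_2 = h^{-1}\circ(A_1\circ A_2)\circ h$ and likewise $\psi_2\circ\psi_1=h^{-1}\circ(A_2\circ A_1)\circ h$, so commutativity of $\psi_1,\psi_2$ is equivalent to commutativity of $A_1,A_2$ on $h(\D)$; third, in the case $\tau\in\D$ note that $A_i(w)=e^{\tG'(\tau)c_i}w$ with $c_i=\lambda(t)-\lambda(s)$ or $\lambda(n)-\lambda(m)$, and two such dilations manifestly commute; fourth, in the case $\tau\in\de\D$ note that $A_i(w)=w+c_i$ and two translations manifestly commute; fifth, conclude.

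I do not expect a genuine obstacle here, since this is an immediate consequence of the structure already exhibited in Proposition \ref{uno}; indeed the author's remark preceding Theorem \ref{m1} already signals this. The only point requiring a word of care is that the $A_i$ a priori are defined only on $h(\D)$ (not all of $\C$), but since $\v_{s,t}$ maps $\D$ into $\D$ the relevant compositions make sense and the remark after Proposition \ref{uno} guarantees $A_i(h(\D))\subseteq h(\D)$, so all iterated compositions are legitimate; in particular the scalars $e^{\tG'(\tau)c_i}$ and the shifts $c_i$ can be composed in either order without leaving the domain where $h^{-1}$ is defined. One should simply check that the indices $(s,t)$ and $(m,n)$ range over the full admissible set, which they do by definition of evolution family, so the displayed identity $\v_{m,n}\circ\v_{s,t}=\v_{s,t}\circ\v_{m,n}$ holds for all $0\le s\le t<+\infty$ and $0\le m\le n<+\infty$.
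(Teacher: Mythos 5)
Your proof is correct and follows essentially the same route as the paper: the paper states Theorem \ref{m1} as an immediate consequence of Proposition \ref{uno}, and your argument simply makes explicit why the conjugated model maps (dilations when $\tau\in\D$, translations when $\tau\in\de\D$) commute, with the domain issue handled exactly by the remark following Proposition \ref{uno}.
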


Proposition \ref{uno} has also the following consequence:

\begin{corollary}
Let $(\v_{s,t})$ be an evolution family of order $d\in
[1,+\infty]$ in $\D$. Then the following are equivalent:
\begin{enumerate}
  \item the Herglotz vector field $G(z,t)$ associated with $(\v_{s,t})$
  is of the form $G(z,t)=g(t)\tG(z)$ (for all $z\in \D$ and
  almost every $t\in [0,+\infty)$) for some $g\in L^d_{\sf
  loc}([0,+\infty),\R^+)$ and some infinitesimal generator
  $\tG$.
  \item There exists a semigroup $(\Phi_t)$ of holomorphic self-maps of
  $\D$ and a locally absolutely continuous and
  non-decreasing function $\lambda: \R^+\to \R^+$ with
  $\stackrel{\bullet}{\lambda}\in L^d_{\sf
  loc}([0,+\infty),\R)$, such that
  $\v_{s,t}(z)=\Phi_{\lambda(t)-\lambda(s)}(z)$.
\end{enumerate}
\end{corollary}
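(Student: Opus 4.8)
The plan is to prove the equivalence by establishing the two implications separately; in both directions everything reduces to Proposition \ref{uno} together with the one-to-one correspondence of Theorem \ref{LODE}.

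For $(1)\Rightarrow(2)$ I would argue as follows. Write the Herglotz vector field associated with $(\v_{s,t})$ as $G(z,t)=g(t)\tG(z)$ with $g\in L^d_{\sf loc}([0,+\infty),\R^+)$ and $\tG$ an infinitesimal generator; the case $\tG\equiv 0$ is trivial (take $\Phi_t=\id$ and $\lambda\equiv 0$), so assume $\tG\not\equiv 0$ and let $(\Phi_t)$ be the semigroup it generates, $h$ its K\"onigs function and $\tau\in\oD$ its Denjoy--Wolff point. Set $\lambda(t):=\int_0^t g(\xi)\,d\xi$; since $g\geq 0$ and $g\in L^d_{\sf loc}$, the function $\lambda$ is non-decreasing, locally absolutely continuous, $\dot\lambda=g\in L^d_{\sf loc}$, and $\lambda(t)-\lambda(s)\geq 0$ for $s\le t$. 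Then I would read off from Proposition \ref{uno} the explicit formula for $\v_{s,t}$ in terms of $h$ and compare it with the functional equations characterizing $h$ in Proposition \ref{Unival-VectorField}, namely $h\circ\Phi_u=e^{\tG'(\tau)u}h$ when $\tau\in\D$ and $h\circ\Phi_u=h+u$ when $\tau\in\de\D$ (whence $\Phi_u=h^{-1}(e^{\tG'(\tau)u}h(\cdot))$, resp. $\Phi_u=h^{-1}(h(\cdot)+u)$, for $u\geq 0$). In either case this identifies $\v_{s,t}$ with $\Phi_{\lambda(t)-\lambda(s)}$, which is exactly (2).

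For $(2)\Rightarrow(1)$, suppose $\v_{s,t}(z)=\Phi_{\lambda(t)-\lambda(s)}(z)$ with $(\Phi_t)$ a semigroup and $\lambda$ non-decreasing, locally absolutely continuous with $\dot\lambda\in L^d_{\sf loc}$. Let $\tG$ be the infinitesimal generator of $(\Phi_t)$; by Berkson--Porta's theorem $u\mapsto\Phi_u(z)$ is analytic on $[0,+\infty)$ with $u$-derivative $\tG(\Phi_u(z))$. Fixing $s$ and $z$, the map $t\mapsto\v_{s,t}(z)$ is the composition of this analytic curve with the non-negative, locally absolutely continuous reparametrization $t\mapsto\lambda(t)-\lambda(s)$, hence is locally absolutely continuous with a.e.\ derivative $\dot\lambda(t)\tG(\v_{s,t}(z))$. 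By the uniqueness part of Theorem \ref{LODE}, the Herglotz vector field of $(\v_{s,t})$ is then $G(z,t)=\dot\lambda(t)\tG(z)$; putting $g:=\dot\lambda$, which is $\geq 0$ a.e.\ because $\lambda$ is non-decreasing and lies in $L^d_{\sf loc}([0,+\infty),\R^+)$, shows that $G$ is splitting, i.e.\ (1) holds. (Note that $g(t)\tG$ is a genuine infinitesimal generator for a.e.\ $t$ precisely because $g(t)\geq 0$.)

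The only step requiring any care is the chain rule in $(2)\Rightarrow(1)$: one must check that precomposing the analytic curve $u\mapsto\Phi_u(z)$ with the locally absolutely continuous function $\lambda(\cdot)-\lambda(s)$ produces a locally absolutely continuous curve obeying the expected product rule almost everywhere. This is standard once one observes that $\lambda(t)-\lambda(s)\geq 0$ (so $\Phi$ is only ever evaluated at non-negative times) and that $u\mapsto\Phi_u(z)$ extends analytically past $u=0$. Everything else is bookkeeping with the defining relations of the K\"onigs function and with the correspondence of Theorem \ref{LODE}.
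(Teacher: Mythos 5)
Your proposal is correct and follows essentially the same route as the paper: $(1)\Rightarrow(2)$ is read off from Proposition \ref{uno} together with the explicit K\"onigs-function form of the semigroup (using $g\geq 0$ so that $\lambda(t)-\lambda(s)\geq 0$), and $(2)\Rightarrow(1)$ comes from differentiating $t\mapsto\Phi_{\lambda(t)-\lambda(s)}(z)$ to get $\dot\lambda(t)\tG(\v_{s,t}(z))$. The only cosmetic difference is that in $(2)\Rightarrow(1)$ you invoke the uniqueness part of Theorem \ref{LODE} to identify $G(z,t)=\dot\lambda(t)\tG(z)$, whereas the paper equates the two expressions for $\de\v_{s,t}/\de t$ and sets $s=t$; both are valid.
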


\begin{proof}
Assume (1). Let $h$ be the K\"onigs function of the semigroup
$(\Phi_t)$ generated by $\tG$. In case $\tau\in \D$ then
$\Phi_r(z)=h^{-1}(e^{\tG'(\tau)r}h(z))$ for all $r\geq 0$,
while, if $\tau\in\de \D$ it follows $\Phi_r(z)=h^{-1}(h(z)+r)$
for all $r\geq 0$. Let $\lambda(t):=\int_0^t g(\xi)d\xi$. Since
$g(t)\geq 0$ for almost every $t\in [0,+\infty)$ it follows
that $\lambda(t)\geq \lambda(s)$ for $0\leq s\leq t<+\infty$.
Hence, by Proposition \ref{uno} we have
$\v_{s,t}(z)=\Phi_{\lambda(t)-\lambda(s)}(z)$, hence (2) holds.

Conversely, assuming (2), let $\tG$ be the infinitesimal
generator associated with $(\Phi_t)$. Then, on the one side
\[
\frac{\de \v_{s,t}(z)}{\de t}=\frac{\de \Phi_{\lambda(t)-\lambda(s)}(z)}{\de t}=\stackrel{\bullet}{\lambda}(t)
\stackrel{\bullet}{\Phi}_{\lambda(t)-\lambda(s)}(z)=\stackrel{\bullet}{\lambda}(t)\tG(\Phi_{\lambda(t)-\lambda(s)}(z))
\]
and, on the other side by \eqref{main-eq},
\[
\frac{\de \v_{s,t}(z)}{\de t}=G(\v_{s,t}(z),t)=G(\Phi_{\lambda(t)-\lambda(s)}(z), t).
\]
Hence $G(\Phi_{\lambda(t)-\lambda(s)}(z),
t)=\stackrel{\bullet}{\lambda}(t)\tG(\Phi_{\lambda(t)-\lambda(s)}(z))$
for all $z\in \D$ and almost every $t\in [0,+\infty)$. Setting
$s=t$ for those points $s$ where $\lambda$ is differentiable we
obtain (1).
\end{proof}

Now we are going to see how the function $g(t)$ in the
decomposition of a splitting Herglotz vector field can be
according to dynamical properties of the associated evolution
family.

\begin{proposition}\label{sobreg}
Let $G(z,t)=g(t)\tilde{G}(z)$ be a splitting Herglotz vector
field. Let $(\v_{s,t})$ be the evolution family associated with
$G(z,t)$. Let $(\Phi_t)$ be the semigroup associated with
$\tG$. Suppose that either $(\Phi_t)$ is hyperbolic or there
exists  a boundary repelling fixed point for $(\Phi_t)$. Then
$g(t)\in \R$ for almost every $t\in [0,+\infty)$. Moreover
\begin{enumerate}
  \item either $g(t)\geq 0$ for almost every $t\in
  [0,+\infty)$,
  \item  or $(\Phi_{t})$ is a group of hyperbolic automorphisms  and there
  exist  $\tau,\sigma\in\de\D$, $\tau\neq\sigma$ such that
  $\tG(z)=\lambda(z-\tau)(z-\sigma)$ for some
  $\lambda\in\C$ such that $\Re
  \lambda(\sigma+\tau)=|\lambda||1+\tau\sigma|$.
\end{enumerate}
\end{proposition}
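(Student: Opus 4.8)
The plan is to describe the set $\Lambda:=\{\lambda\in\C:\lambda\tG\text{ is an infinitesimal generator}\}$; since $G(z,t)=g(t)\tG(z)$ is a Herglotz vector field, axiom H4 forces $g(t)\in\Lambda$ for a.e.\ $t\in[0,+\infty)$, so all assertions reduce to understanding $\Lambda$ under the standing hypothesis. First note that the hypothesis excludes $\tG\equiv0$, since the identity semigroup is neither hyperbolic nor has a boundary repelling fixed point. The crucial remark is that, under either alternative in the hypothesis, $\tG$ has a boundary regular null point $q\in\de\D$ whose spectral value $\beta:=\angle\lim_{z\to q}\tG(z)/(z-q)$ is \emph{real and non-zero}: if $(\Phi_t)$ is hyperbolic take $q=\tau$, its Denjoy--Wolff point, so $\tG(\tau)=0$ and $\beta=\tG'(\tau)<0$, as one sees by differentiating $\frac{\de\Phi_t(z)}{\de t}=\tG(\Phi_t(z))$ in $z$ at $z=\tau$ and using $\angle\lim_{z\to\tau}\Phi_t'(z)=e^{\beta t}$ with $\beta<0$; if instead $(\Phi_t)$ has a boundary repelling fixed point $p$, take $q=p$, so $\tG(p)=0$ and $\beta=\tG'(p)>0$ by the same computation, the multiplier now being $e^{\beta t}>1$.

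Now fix $\lambda\in\Lambda\setminus\{0\}$ and let $(\Psi_t)$ be the semigroup generated by $\lambda\tG$. Since $\angle\lim_{z\to q}\lambda\tG(z)=\lambda\cdot0=0$, the point $q$ is a boundary regular fixed point of $(\Psi_t)$, with multiplier $\Psi_t'(q)=e^{(\lambda\tG)'(q)t}=e^{\lambda\beta t}$. By the Julia--Wolff--Carath\'eodory theorem the angular derivative of a holomorphic self-map of $\D$ at a boundary fixed point is a positive real number, so $e^{\lambda\beta t}\in(0,+\infty)$ for every $t\ge0$; as this equals $1$ at $t=0$, continuity in $t$ forces $\operatorname{Im}(\lambda\beta t)\equiv0$, i.e.\ $\lambda\beta\in\R$, whence $\lambda\in\R$ because $\beta$ is real and non-zero. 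Thus $\Lambda\subseteq\R$, so $g(t)\in\R$ for a.e.\ $t$. (In the hyperbolic case there is a quicker route: conjugating by the K\"onigs function $h$ of $(\Phi_t)$, the field $\lambda\tG$ is carried to the constant field $\lambda$ on $h(\D)$, which lies in a horizontal strip of width $\pi/|\beta|$; since the translations $w\mapsto w+\lambda t$, $t\ge0$, must map this strip into itself, necessarily $\lambda\in\R$.)

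Suppose next that it is \emph{not} true that $g(t)\ge0$ for a.e.\ $t$. Since $g(t)\in\R$ a.e., the set $\{t:g(t)<0\}$ has positive measure, so $g(t_0)\tG$ is a generator for some $t_0$ with $g(t_0)<0$; multiplying by the positive number $|g(t_0)|^{-1}$ shows $-\tG$ is a generator as well. Hence both $\tG$ and $-\tG$ generate semigroups, i.e.\ $(\Phi_t)$ is a group of automorphisms of $\D$. Such a group is elliptic, parabolic or hyperbolic; the elliptic ones (conjugate to rotations) have no boundary fixed point, and the parabolic ones have a single boundary fixed point, with unit multiplier, so in neither case can $(\Phi_t)$ be hyperbolic or possess a boundary repelling fixed point. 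Therefore $(\Phi_t)$ is a group of hyperbolic automorphisms, which is alternative~(2).

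It remains to record the form of $\tG$ in this last case. A group of hyperbolic automorphisms has exactly two distinct boundary fixed points $\tau,\sigma\in\de\D$, and by Proposition~\ref{Unival-VectorField} its K\"onigs function $h$ maps $\D$ conformally onto a horizontal strip; hence $\tG=1/h'$ is a degree-two polynomial vanishing precisely at $\tau$ and $\sigma$, so $\tG(z)=\lambda(z-\tau)(z-\sigma)$ for some $\lambda\in\C\setminus\{0\}$. Imposing that $\tG$ be an infinitesimal generator --- equivalently that its Berkson--Porta factor relative to the Denjoy--Wolff point, namely $p(z)=\lambda\tau(z-\sigma)/(z-\tau)$, have non-negative real part, equivalently that the strip $h(\D)$ be genuinely horizontal --- is a direct computation that yields the stated relation among $\lambda,\tau,\sigma$. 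The one non-elementary ingredient of the whole argument is the passage in the second paragraph from ``$q$ is a boundary regular null point of the generator $\lambda\tG$'' to ``$q$ is a boundary regular fixed point of $(\Psi_t)$ with multiplier $e^{(\lambda\tG)'(q)t}$'': this rests on the correspondence between boundary regular null points of an infinitesimal generator and boundary regular fixed points of its semigroup (part of the boundary fixed-point theory for semigroups recalled in Section~\ref{prel}), and is the step I expect to carry the weight of the proof; everything else is bookkeeping with the Berkson--Porta data.
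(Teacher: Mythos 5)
Your treatment of the two main assertions is correct and follows essentially the same route as the paper. The reality of $g$ is obtained there by applying the boundary fixed point theorem of Contreras--D\'{\i}az-Madrigal--Pommerenke twice: once to $(\Phi_t)$, to get that $\tG$ has angular limit $0$ at the distinguished boundary point with real, non-zero spectral value $\beta$, and once to the semigroup generated by $G(\cdot,t)=g(t)\tG$, to conclude $g(t)\beta\in\R$; your passage through the boundary regular null point $q$, the multiplier $e^{\lambda\beta t}$ of $(\Psi_t)$ at $q$, and Julia--Wolff--Carath\'eodory is exactly that correspondence, and you correctly identify it as the load-bearing ingredient (your ``differentiating at $z=\tau$'' in the first paragraph is only heuristic, since $\tau\in\de\D$, but the cited correspondence is what makes it rigorous). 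The dichotomy is handled identically in both proofs: a $t_0$ with $g(t_0)<0$ together with the cone property of infinitesimal generators shows $-\tG$ is a generator, hence $(\Phi_t)$ is a group of automorphisms, and elliptic and parabolic groups are excluded because they admit no boundary fixed point with multiplier different from $1$.

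The only divergence is the final step, where the paper simply cites \cite[Theorem 2.3]{BCDfun} for the form of $\tG$ and the relation on $\lambda$, while you assert that ``a direct computation yields the stated relation.'' That cannot be left as an assertion, because the direct computation does not produce the printed relation: requiring $\Re p(z)\geq 0$ on $\D$ for $p(z)=\lambda\tau(z-\sigma)/(z-\tau)$ forces the image half-plane of $p$ to be exactly the right half-plane, which amounts to $\bar\lambda=-\lambda\tau\sigma$ (equivalently $\Re[\lambda(\tau+\sigma)]=0$, together with $\lambda\tau^2\in\R$ in the degenerate case $\sigma=-\tau$), not to $\Re[\lambda(\sigma+\tau)]=|\lambda|\,|1+\tau\sigma|$. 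Concretely, $\tG(z)=i(z-i)(z+i)=i(1+z^2)$ is of the form $a-\bar a z^2+i\beta z$ with $a=i$, $\beta=0$, hence it generates a group of hyperbolic automorphisms with fixed points $\pm i$; here $\lambda=i$, $\tau=i$, $\sigma=-i$, and $\Re[\lambda(\sigma+\tau)]=0$ while $|\lambda|\,|1+\tau\sigma|=2$. So if you actually carry out the computation you announce, you will arrive at a relation different from the one in the statement; the paper's own proof conceals this point by deferring to the quoted theorem. Everything before this last sentence of your argument stands.
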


\begin{proof}
Let $\tau\in\de \D$ be either the Denjoy-Wolff point of the
semigroup in case $(\Phi_t)$ is hyperbolic, or the boundary
repelling fixed point. Then, by the very definition
$\lim_{\R\ni r\to 1^-} \Phi_t(r\tau)=\tau$ and $\lim_{\R\ni
r\to 1^-} \Phi'_t(r\tau)=e^{\beta t}$ for some
$\beta\in\R\setminus\{0\}$. Hence by \cite[Theorem
1]{Contreras-Diaz-Pommerenke:Scand} it follows that
\[
\lim_{\R\ni r\to 1^-}\tG(r\tau)=0, \quad \lim_{\R\ni r\to 1^-}\tG'(r\tau)=\beta.
\]
Since $G(z,t)=g(t)\tG(z)$, we have for almost every $t\in
[0,+\infty)$
\[
\lim_{\R\ni r\to 1^-}G(r\tau, t)=0, \quad \lim_{\R\ni r\to 1^-}G'(r\tau, t)=g(t)\beta.
\]
Since $G(z,t)$ is an infinitesimal generator for almost every
$t\in [0,+\infty)$, another application of \cite[Theorem
1]{Contreras-Diaz-Pommerenke:Scand} gives
\[
g(t)\beta\in \R
\]
for almost every $t\in [0,+\infty)$, from which it follows that
$g(t)\in \R$ for a.e. $t\in [0',+\infty)$.

Now, assume there exists $t_0\in [0,+\infty)$ such that
$G(z,t_0)$ is an infinitesimal generator and $g(t_0)<0$. Then
$G(z,t_0)=-|g(t_0)|\tG(z)$ is an infinitesimal generator. Since
infinitesimal generators form a real cone, $-\tG(z)$ is an
infinitesimal generator as well. Hence $\tG(z)$ is an
infinitesimal generator of a {\sl group} of automorphisms of
$\D$, having $\tau$ as a fixed point. Now, semigroups of
elliptic and parabolic automorphisms have only one (common)
fixed point (see, {\sl e.g.} \cite[Corollary 1.4.20]{Abate})
which must be in $\D$ in the elliptic case and
$\Phi_t'(\tau)=1$ in the parabolic case. Hence $(\Phi_t)$ is a
group of hyperbolic automorphisms. This implies that  $\tG(z)$
has the required form  by \cite[Theorem 2.3]{BCDfun}.
\end{proof}

\begin{example}
Let $\tau\in\de \D$. Let
$\tG(z)=(z-\tau)(\overline{\tau}z-1)C_\tau(z)$ with
$C_\tau(z)=(\tau+z)/(\tau-z)$ the Cayley transform with pole
$\tau$. Then $\tG(z)$ is an infinitesimal generator of a group
of hyperbolic automorphisms with Denjoy-Wolff point $\tau$ (see
\cite[Example 2.7]{BCDfun}). Let $G(z,t)=-1^{[t]}\tG(z)$, where
$[t]$ denotes the integer part of $t$. Then  $G(z,t)$ is a
splitting Herglotz vector field and for each fixed $t\geq 0$,
$G(z,t)$ generates a group of hyperbolic automorphisms of $\D$.
\end{example}

\begin{example}
Let $G(z,t):=(1+it)(z-1)^2$. Then $G(z,t)$ is a splitting
Herglotz vector field, with $\tG(z)=(z-1)^2$ and $g(t)=1+it$.
Notice that $g(t)\tG(z)$ generates a semigroup of parabolic
type with no boundary repelling fixed points for each fixed
$t\geq 0$.
\end{example}

\begin{example}
Let $G(z,t):=-(t(1+i)+1)z(2+z)$. Then $G(z,t)$ is a splitting
Herglotz vector field, with $\tG(z)=-z(2+z)$ and
$g(t)=t(1+i)+1$. Notice that $g(t)\tG(z)$ generates a semigroup
of elliptic type with no boundary repelling  fixed points for
each fixed $t\geq 0$.
\end{example}

\begin{theorem}
Let $G(z,t)=g(t)\tG(z)$ be a splitting Herglotz vector field of
order $d$ such that $\tG$ is not a generator of a group of
hyperbolic automorphisms of $\D$. Let
$\tG(z)=(z-\tau)(\overline{\tau}z-1)p(z)$ be the Berkson-Porta
decomposition of $\tG$. Then the Berkson-Porta data $( p(z,t),
\tau(t))$ of $G(z,t)$ is
\[
\tau(t)=\tau, \quad p(z,t)=g(t)p(z).
\]
\end{theorem}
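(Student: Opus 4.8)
The plan is to use the essential uniqueness of the Berkson-Porta data, as described after Definition \ref{DH}: the function $p(z,t)$ is uniquely determined up to a zero measure set in $t$, and $\tau(t)$ is uniquely determined provided $p(z,t)\not\equiv 0$. So it suffices to verify that the proposed pair $(\tau, g(t)p(z))$ actually constitutes Berkson-Porta data for $G(z,t)$, i.e.\ that it satisfies the three defining properties (measurability/local $L^d$ in $t$, holomorphy in $z$, and nonnegative real part), together with the structural identity $G(z,t)=(z-\tau(t))(\overline{\tau(t)}z-1)p(z,t)$.

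First I would write down the Berkson-Porta decomposition of $\tG$, namely $\tG(z)=(z-\tau)(\overline{\tau}z-1)p(z)$ with $\Re p(z)\ge 0$, and observe that $G(z,t)=g(t)\tG(z)=(z-\tau)(\overline{\tau}z-1)\,g(t)p(z)$, which is exactly the claimed form with $\tau(t)\equiv\tau$ and $p(z,t):=g(t)p(z)$. The measurability (indeed local $L^d$) of $t\mapsto g(t)p(z)$ follows from $g\in L^d_{\sf loc}$, and holomorphy in $z$ follows from holomorphy of $p$. The only substantive point to check is that $\Re(g(t)p(z))\ge 0$ for all $z\in\D$ and a.e.\ $t$; this is where Proposition \ref{sobreg} enters, since a priori $g(t)$ could be complex.

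The key step is therefore to show $g(t)\ge 0$ for a.e.\ $t$. Here I would invoke Proposition \ref{sobreg}: since $\tG$ is not the generator of a group of hyperbolic automorphisms, alternative (2) there is excluded, so alternative (1) holds, i.e.\ $g(t)\ge 0$ for almost every $t\in[0,+\infty)$ — but note Proposition \ref{sobreg} has the standing hypothesis that $(\Phi_t)$ is hyperbolic or has a boundary repelling fixed point. The main obstacle is thus handling the remaining cases where $\tG$ has no such boundary fixed-point structure (e.g.\ elliptic with interior Denjoy-Wolff point, or parabolic of a suitable type, with no boundary repelling fixed point): in those cases one cannot simply quote Proposition \ref{sobreg}. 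To cover them, I would argue directly that $g(t)\tG$ being an infinitesimal generator for a.e.\ $t$ forces $g(t)\ge 0$: if $g(t_0)=|g(t_0)|e^{i\theta}$ with $g(t_0)\ne 0$ and $g(t_0)\notin\R^+$, then both $\tG$ and $e^{i\theta}\tG$ would be generators, and using that generators form a real convex cone one deduces (when $\theta\notin\pi\Z$) that $i\tG$ and hence $-\tG$ are generators too, so $\tG$ generates a group of automorphisms; the only automorphism groups with the dynamics allowed here would again be the hyperbolic ones, contradicting the hypothesis (the elliptic and parabolic automorphism cases are pinned down by \cite[Corollary 1.4.20]{Abate} exactly as in the proof of Proposition \ref{sobreg}). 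If $g(t_0)<0$ is real, the same cone argument gives that $-\tG$ is a generator, and we conclude identically.

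Once $g(t)\ge 0$ a.e.\ is established, we have $\Re(g(t)p(z))=g(t)\Re p(z)\ge 0$ for all $z\in\D$ and a.e.\ $t$, so $(\tau,\,g(t)p(z))$ genuinely are Berkson-Porta data for $G(z,t)$. Finally, since $\tG\not\equiv 0$ (it generates a semigroup that is not trivial, as otherwise the hypothesis about hyperbolic automorphisms is vacuous, but in any case $\tG\equiv 0$ is excluded by $\tG$ being a nonzero generator; if $g\equiv 0$ then $G\equiv 0$ and the statement is trivial), we have $p(z,t)=g(t)p(z)\not\equiv 0$ unless $g\equiv 0$, so the uniqueness clause applies and forces $\tau(t)=\tau$ and $p(z,t)=g(t)p(z)$ up to the allowed zero measure ambiguity, which is precisely the assertion.
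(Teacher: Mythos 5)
Your reduction to the essential uniqueness of the Berkson--Porta data is fine in principle, but the step you call ``key'' --- proving $g(t)\ge 0$ for a.e.\ $t$ --- is not only unprovable here, it is false under the theorem's hypotheses. The paper's own examples show this: $G(z,t)=(1+it)(z-1)^2$ (parabolic, $\tG(z)=(z-1)^2$, $g(t)=1+it$) and $G(z,t)=-(t(1+i)+1)z(2+z)$ (elliptic, $g(t)=t(1+i)+1$) are splitting Herglotz vector fields in which $\tG$ is certainly not the generator of a group of hyperbolic automorphisms, yet $g$ is genuinely complex; the conclusion $p(z,t)=g(t)p(z)$ with $\Re p(z,t)\ge 0$ still holds because the argument of $p(z)$ is confined to a strictly smaller sector, not because $g(t)\ge 0$. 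Your argument for the false claim is also flawed on its own terms: from $\tG$ and $e^{i\theta}\tG$ both being infinitesimal generators, the real cone structure only yields that $\lambda\tG$ is a generator for $\lambda$ in the sector spanned by $1$ and $e^{i\theta}$; this gives neither $i\tG$ nor $-\tG$ unless $\theta=\pi$. Moreover, even if you did reach ``$\tG$ generates a group of automorphisms,'' the hypothesis only excludes \emph{hyperbolic} automorphism groups; elliptic groups (e.g.\ $\tG(z)=iz$, where $g$ may take values in a half-plane) are perfectly admissible, and the appeal to \cite[Corollary 1.4.20]{Abate} ``as in Proposition \ref{sobreg}'' does not transfer, since that dichotomy rests on the standing hypothesis there (hyperbolicity or a boundary repelling fixed point), which is absent in the present theorem.

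What is actually needed, and what the paper proves, is that for a.e.\ $t$ the point $\tau$ is the Berkson--Porta (Denjoy--Wolff) point of the generator $G(\cdot,t)$; once that is known, writing $G(z,t)=(z-\tau)(\overline{\tau}z-1)p(z,t)$ with $\Re p(z,t)\ge 0$ is automatic from Berkson--Porta's theorem applied to $G(\cdot,t)$, and comparing with $G(z,t)=g(t)(z-\tau)(\overline{\tau}z-1)p(z)$ plus essential uniqueness gives $\tau(t)=\tau$ and $p(z,t)=g(t)p(z)$; no sign condition on $g$ enters. Concretely: if $\tau\in\D$ this is immediate from $G(\tau,t)=g(t)\tG(\tau)=0$; if $\tau\in\de\D$ one uses the radial limits $\lim_{r\to 1^-}G(r\tau,t)=0$ and $\lim_{r\to 1^-}G'(r\tau,t)=g(t)\beta$ together with \cite[Theorem 1]{Contreras-Diaz-Pommerenke:Scand}, invoking Proposition \ref{sobreg} only in the hyperbolic subcase $\beta<0$ (where its hypotheses do hold and the exclusion of hyperbolic automorphism groups forces $g(t)\ge0$, hence $g(t)\beta\le 0$), while in the parabolic subcase $\beta=0$ no information on $g$ is required. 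Your proposal never identifies the Denjoy--Wolff point of $G(\cdot,t)$ and instead routes everything through a positivity statement that the theorem does not assert and the paper's examples refute, so the proof does not go through.
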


\begin{proof}
At those points where $g(t)=0$ the result is true. Then we can
assume that $g(t)\neq 0$ for almost every $t$.

If $\tau\in \D$ then $\tG(\tau)=0$ and hence
$G(\tau,t)=g(t)\tG(\tau)=0$ for almost every $t\in
[0,+\infty)$. Therefore for almost every $t\in [0,+\infty)$ it
follows that
\[
G(z,t)=(z-\tau)(\overline{\tau}z-1)p(z,t).
\]
By the ``essential'' uniqueness of the Berkson-Porta data
 it follows that $\tau(t)=\tau$ and
$p(z,t)=g(t)p(z)$ for almost every $t\in [0,+\infty)$.

Assume $\tau\in\de\D$. Let $(\Phi_t)$ be the semigroup
generated by $\tG$ and let $\lim_{\R\ni r\to 1^-}
\Phi'_t(r\tau)=e^{\beta t}$. There are two cases.

If $(\Phi_t)$ is hyperbolic then $\beta<0$, and by hypothesis
and by Proposition \ref{sobreg} it follows that $g(t)\geq 0$
for almost every $t\in [0,+\infty)$ (and actually $g(t)>0$ for
almost every $t\geq 0$ because we are assuming $g(t)\neq 0$
almost everywhere). If $(\Phi_t)$ is parabolic then $\beta=0$.
In both cases
\[
\lim_{\R\ni r\to 1^-}G(r\tau, t)=0, \quad \lim_{\R\ni r\to 1^-}G'(r\tau, t)=g(t)\beta\leq 0,
\]
for almost every $t$.  By \cite[Theorem
1]{Contreras-Diaz-Pommerenke:Scand} this implies that $\tau$ is
the Denjoy-Wolff point of the semigroup generated by
$g(t)\tG(z)$ for almost every $t$. Hence by the Berkson-Porta
formula
\[
G(z,t)=(z-\tau)(\overline{\tau}z-1)p(z,t), \quad \hbox{a.e. } t\geq 0,
\]
and again by the uniqueness of the Berkson-Porta data it
follows $\tau(t)=\tau$ and $p(z,t)=g(t)p(z)$ for almost every
$t\in [0,+\infty)$.
\end{proof}

\section{Commuting evolution families}

The aim of the present section is to prove that the Herglotz
vector field of a commuting evolution family is splitting. To
this aim we need some preliminary results, interesting by
themselves.

Recall that if $X,Y:\D\to \C$ are holomorphic vector fields
then $[X,Y]:=YX'-XY'$.

It is well known that $[X,Y]\equiv 0$ if and only if their
flows are commuting. Moreover, since $T\D$ is generated by
$\frac{\de}{\de z}$ then $[X,Y]\equiv 0$ is equivalent to
$X(z)=\lambda Y(z)$ for some $\lambda\in \C$. Thus the
following holds.

\begin{lemma}\label{commutingbrac}
Let $G(z,t)$ be a Herglotz vector field. For almost every
$t\geq 0$ fixed, let $(\phi^t_r)$ be the semigroup generated by
$G(\cdot,t)$. Then the following are equivalent:
\begin{enumerate}
  \item $G(z,t)$ is splitting,
  \item $[G(\cdot,t),G(\cdot,s)]\equiv 0$
for almost every $s,t\in [0,+\infty)$,
  \item $\phi^t_r \circ \phi^s_r=\phi^s_r\circ \phi^t_r$ for all $r\geq 0$
  and almost all $t,s\geq 0$.
\end{enumerate}
\end{lemma}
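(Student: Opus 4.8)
The plan is to obtain $(1)\Rightarrow(2)$ for free, to derive $(2)\Leftrightarrow(3)$ from the classical correspondence between the vanishing of the bracket and commutativity of the generated semiflows, and to put the actual work into $(2)\Rightarrow(1)$, where the task is to upgrade the a.e.\ pointwise proportionality of the vector fields $G(\cdot,t)$ to a genuine measurable splitting of the prescribed order. For $(1)\Rightarrow(2)$: if $G(z,t)=g(t)\tG(z)$ then $[G(\cdot,t),G(\cdot,s)]=g(t)g(s)\,[\tG,\tG]\equiv0$ for a.e.\ $s,t$.

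For $(2)\Leftrightarrow(3)$, fix a pair $(t,s)$ in the full-measure set where $X:=G(\cdot,t)$ and $Y:=G(\cdot,s)$ are infinitesimal generators, and let $(\phi^t_r)$, $(\phi^s_r)$ be the semigroups they generate; by the Berkson--Porta theorem recalled above these are real-analytic in $r\ge0$, with $\phi^t_r=\id+rX+\tfrac{r^2}{2}X'X+O(r^3)$ and likewise for $\phi^s_r$. A short computation then gives
\[
\phi^t_r\circ\phi^s_r-\phi^s_r\circ\phi^t_r=r^2\,[X,Y]+O(r^3),
\]
so if $(3)$ holds for this pair then, by real analyticity in $r$, $[X,Y]\equiv0$. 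Conversely, if $[X,Y]\equiv0$ then, as recalled, $Y=\mu X$ for some $\mu\in\C$; using that $X$ is invariant under its own flow, $(\phi^t_a)'(w)X(w)=X(\phi^t_a(w))$, one checks that for each fixed $a\ge0$ both $r\mapsto\phi^t_a\circ\phi^s_r$ and $r\mapsto\phi^s_r\circ\phi^t_a$ solve $w'=Y\circ w$ with $w(0)=\phi^t_a$, hence coincide by uniqueness; taking $a=r$ yields $(3)$. (Alternatively one may simply quote the well-known equivalence recalled before the statement.)

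For $(2)\Rightarrow(1)$: if $G(\cdot,t)\equiv0$ for a.e.\ $t$ the claim is trivial (take $g\equiv0$), so assume $\{t:G(\cdot,t)\not\equiv0\}$ has positive measure. Reading $(2)$ as a statement about a.e.\ $(t,s)$ and applying Fubini, pick $t_0$ such that $\tG:=G(\cdot,t_0)$ is an infinitesimal generator (by H4), $\tG\not\equiv0$, and $[\tG,G(\cdot,s)]\equiv0$ for a.e.\ $s$. Fix $z_0\in\D$ with $\tG(z_0)\neq0$ and set
\[
g(t):=\frac{G(z_0,t)}{\tG(z_0)}.
\]
Then $g$ is measurable by H1, and $|g(t)|\le k_{\{z_0\},T}(t)/|\tG(z_0)|$ a.e.\ on $[0,T]$ by H3, so $g\in L^d_{\sf loc}([0,+\infty),\C)$. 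Finally, for a.e.\ $t$ the remark preceding the statement yields $G(\cdot,t)=\lambda(t)\tG$ for some $\lambda(t)\in\C$ (with $\lambda(t)=0$ where $G(\cdot,t)\equiv0$); evaluating at $z_0$ forces $\lambda(t)=g(t)$, so $G(z,t)=g(t)\tG(z)$ for all $z\in\D$ and a.e.\ $t$, which is $(1)$.

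The hard part is the measure-theoretic bookkeeping in $(2)\Rightarrow(1)$: one must choose the reference time $t_0$ outside several null sets at once, treat the degenerate case $G\equiv0$ separately, and check that the scalar $g$ really inherits measurability and $L^d_{\sf loc}$-integrability from H1--H3 rather than being merely defined pointwise. By contrast, $(2)\Leftrightarrow(3)$ is routine, the only caveat being that one is dealing with semigroups (flows for $r\ge0$) rather than one-parameter groups.
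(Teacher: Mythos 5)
Your proposal is correct and follows essentially the same route as the paper, which simply invokes the classical facts that $[X,Y]\equiv 0$ holds if and only if the generated flows commute and that, for holomorphic vector fields on $\D$, vanishing bracket is equivalent to proportionality $X=\lambda Y$, and then reads off the splitting. Your contribution is merely to write out these well-known equivalences (Taylor expansion in $r$ plus the flow-invariance identity for one direction, uniqueness of ODE solutions for the other) and to make explicit the measure-theoretic bookkeeping for $(2)\Rightarrow(1)$ (choice of $t_0$ via Fubini and H4, measurability and $L^d_{\sf loc}$ bounds for $g$ from H1 and H3), all of which is sound.
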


The next proposition is a technical result about the
differentiability of  evolution families (cfr. \cite[Theorem
6.4]{BCM1})

\begin{proposition}\label{techdiff}
Let $G(z,t)$ be a Herglotz vector field of order $d\in
[1,+\infty]$ and let $(\v_{s,t})$ be its associated evolution
family. Then there exists a zero measure set $M\in [0,+\infty)$
such that for all $t\in [0,+\infty)\setminus M$ it holds
\[
\lim_{h\to 0^+} \frac{\v_{t,t+h}(z)-z}{h}=G(z,t),
\]
uniformly on compacta of $\D$.
\end{proposition}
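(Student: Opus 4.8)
The plan is to start from the integral form of the Loewner ODE. Fixing $t_0\ge0$ and $z\in\D$, from EF3 the map $t\mapsto\v_{t_0,t}(z)$ has increments dominated by $\int k_{z,T}$ with $k_{z,T}\in L^1([0,T])$, hence is locally absolutely continuous on $[t_0,+\infty)$; combining the fundamental theorem of calculus with \eqref{main-eq} of Theorem \ref{LODE} gives, for every $h\ge0$,
\[
\v_{t_0,t_0+h}(z)-z=\int_{t_0}^{t_0+h}G\bigl(\v_{t_0,\xi}(z),\xi\bigr)\,d\xi .
\]
I would then divide by $h$, add and subtract $G(z,\xi)$, and write
\[
\frac{\v_{t_0,t_0+h}(z)-z}{h}=\frac1h\int_{t_0}^{t_0+h}G(z,\xi)\,d\xi+\frac1h\int_{t_0}^{t_0+h}\bigl(G(\v_{t_0,\xi}(z),\xi)-G(z,\xi)\bigr)\,d\xi ,
\]
and show that, off a single null set $M$, the first term tends to $G(z,t_0)$ and the second to $0$ as $h\to0^{+}$, both uniformly on compacta of $\D$.

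For the first term I would fix a compact $K\subset\D$, pick $r<1$ with $K\subset\{|z|\le r\}$, and set $\psi_h(z):=\frac1h\int_{t_0}^{t_0+h}\bigl(G(z,\xi)-G(z,t_0)\bigr)\,d\xi$. Each $\psi_h$ is holomorphic on $\{|z|<r\}$ by H2, and by H3, for $t_0$ outside the null set of non-Lebesgue points of the dominating function $k_{r,T}$ of $\{|z|\le r\}$, the family $\{\psi_h\}_{0<h\le h_0}$ is uniformly bounded on $\{|z|\le r\}$, hence normal. For $z$ in a fixed countable dense subset $D$ of $\{|z|<r\}$ the function $\xi\mapsto G(z,\xi)$ is in $\loc$, so at its Lebesgue points $\psi_h(z)\to0$; discarding the corresponding countable union of null sets, $\psi_h(z)\to0$ for all $z\in D$. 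A Vitali-type argument then forces $\psi_h\to0$ uniformly on $K$: if $h_n\to0$ and $z_n\in K$ with $|\psi_{h_n}(z_n)|\ge\varepsilon$, normality lets me extract $\psi_{h_n}\to\psi$ locally uniformly, and $\psi\equiv0$ since $\psi$ vanishes on the dense set $D$, contradicting $|\psi_{h_n}(z_n)|\ge\varepsilon$. (Equivalently, one could view $t\mapsto G(\cdot,t)$ as an element of $\loc([0,+\infty);X_r)$, $X_r$ the bounded holomorphic functions on $\{|z|<r\}$, and invoke the Lebesgue differentiation theorem for Bochner integrals, at the cost of a Pettis measurability check.)

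For the second term, keeping $K$ and $r$ and choosing $r<r'<r''<1$, I would first note that for each fixed $t_0$, since $\v_{t_0,\xi}\to\id$ uniformly on $K$ as $\xi\to t_0^{+}$, there is $h_0(t_0)>0$ with $\v_{t_0,\xi}(z)\in\{|w|\le r'\}$ for all $z\in K$ and $\xi\in[t_0,t_0+h]$ whenever $h\le h_0(t_0)$; then the integral representation and H3 give
\[
\sup_{z\in K}\ \sup_{\xi\in[t_0,t_0+h]}\bigl|\v_{t_0,\xi}(z)-z\bigr|\le\int_{t_0}^{t_0+h}k_{r'',T}(u)\,du=:\varepsilon(t_0,h),
\]
with $\varepsilon(t_0,h)\to0$ as $h\to0^{+}$ for every $t_0$. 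Cauchy's estimate on $\{|z|<r''\}$ furnishes $C=C_{r',r''}$ with $|G(w,\xi)-G(z,\xi)|\le C\,k_{r'',T}(\xi)\,|w-z|$ for $z,w\in\{|z|\le r'\}$, so the second term is bounded, uniformly for $z\in K$, by $C\,\varepsilon(t_0,h)\cdot\frac1h\int_{t_0}^{t_0+h}k_{r'',T}(\xi)\,d\xi$; at every Lebesgue point $t_0$ of $k_{r'',T}$ the average stays bounded while $\varepsilon(t_0,h)\to0$, so this term tends to $0$ uniformly on $K$.

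Finally I would exhaust $\D$ by compacta $K_n$ and let $M$ be the countable union, over $n$ and over $T\in\N$, of all the null sets above — non-Lebesgue points of the various dominating functions and of $\xi\mapsto G(z,\xi)$ for $z$ in countable dense sets. For $t_0\notin M$ the convergence $\frac{\v_{t_0,t_0+h}(z)-z}{h}\to G(z,t_0)$ is then uniform on each $K_n$, hence on every compact subset of $\D$. The main obstacle I anticipate is precisely the \emph{uniformity in $z$} of the first term, which forces the detour through Vitali's theorem (or a vector-valued Lebesgue differentiation theorem with its measurability subtleties); the second term, by contrast, is a routine application of the Cauchy inequalities and dominated convergence once the local boundedness of $G(\cdot,t)$ from H3 is exploited.
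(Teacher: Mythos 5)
Your argument is correct, but it follows a genuinely different route from the paper. The paper does not go back to the integral form of the ODE at all: it sets $f^t_h(z)=\bigl(\v_{t,t+h}(z)-z\bigr)/h$, proves normality of this family for a.e.\ $t$ exactly as you do (equiboundedness at Lebesgue points of the dominating functions plus Montel), and then, instead of your two-term decomposition, it invokes the already established differentiability result \cite[Theorem 6.4]{BCM1}, which gives $\lim_{h\to0^+}\bigl(\v_{0,t+h}(z)-\v_{0,t}(z)\bigr)/h=G(\v_{0,t}(z),t)$ uniformly on compacta for a.e.\ $t$; combining this with the algebraic identity $\v_{0,t+h}=\v_{t,t+h}\circ\v_{0,t}$ identifies every normal limit of $\{f^t_h\}$ with $G(\cdot,t)$ on the open set $\v_{0,t}(\D)$ (openness coming from univalence of the evolution family), and the identity principle then forces equality on all of $\D$. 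Your proof replaces that shortcut by a self-contained analysis: the averaged term $\frac1h\int_{t_0}^{t_0+h}G(z,\xi)\,d\xi$ is handled by an ad hoc vector-valued Lebesgue differentiation argument (pointwise Lebesgue points on a countable dense set, normality, Vitali), and the error term $\frac1h\int_{t_0}^{t_0+h}\bigl(G(\v_{t_0,\xi}(z),\xi)-G(z,\xi)\bigr)\,d\xi$ by a Cauchy-estimate Lipschitz bound together with the locally uniform smallness of $\v_{t_0,\xi}-\id$; all steps check out (the compact-uniform version of EF3 you use is the same strengthening the paper itself invokes, and is available from \cite{BCM1}, or can be bypassed via a normal-families argument). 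What the paper's route buys is brevity and the elegant use of the identity principle to avoid estimating $G$ along the flow; what yours buys is independence from \cite[Theorem 6.4]{BCM1} (only \eqref{main-eq} is needed) and a completely explicit description of the exceptional null set $M$ as a union of sets of non-Lebesgue points.
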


\begin{proof}
Let $f^t_h(z):=\frac{\v_{t,t+h}(z)-z}{h}$ for $0<h<1$. We first
show that $\{f^t_h\}$ is a normal family for all $t\in
[0,+\infty)\setminus N_0$ for some set $N_0$ of zero measure.

To this aim, let $\{K_n\}_{n\in \N}$ be a sequence of compacta
of $\D$ such that $K_{n}\subset \stackrel{o}{K}_{n+1}$ and
$\cup_n K_n=\D$. Let $\{T_n\}_{n\in \N}$ be a sequence of
positive real number such that $T_n<T_{n+1}$ and $\lim_{n\to
\infty} T_n=+\infty$. By the very definition of evolution
family (Property EF3) it follows that for each $n$ there exists
$k_n:=k_{K_n,T_n}\in L^d([0,T_n], \R^+)$ such that
\[
|\v_{t,t+h}(z)-z|\leq \int_t^{t+h} k_n(\xi)d\xi
\]
for all $t,h\geq 0$ with $t+h<T_n$ and all $z\in K_n$. Hence
\[
|f^t_h(z)|\leq \frac{1}{h}\int_t^{t+h} k_n(\xi)d\xi,
\]
and, since the function on the right hand side tends to
$k_n(t)$ for almost every $t\in [0,T_n)$, it follows that
$\{f^t_h\}$ is a equibounded in $K_n$ for almost every $t\in
[0,T_n)$. Since the countable union of zero measure sets has
zero measure, it follows that $\{f^t_h\}$ is equibounded on
compacta of $\D$ for almost all $t\geq 0$. Montel's theorem
implies that $\{f^t_h\}$ is normal.

By \cite[Theorem 6.4]{BCM1} there exists a zero measure set
$N_1\subset [0,+\infty)$ such that for all $t\in
[0,+\infty)\setminus N_1$ the limit
\[
\lim_{h\to 0^+} \frac{\v_{0,t+h}(z)-\v_{0,t}(z)}{h}=G(\v_{0,t}(z), t)
\]
uniformly on compacta of $\D$. Hence for all $t\in
[0,+\infty)\setminus N_1$ it follows
\[
\lim_{h\to 0^+}\frac{\v_{t,t+h}(\v_{0,t}(z))-\v_{0,t}(z)}{h}=
\lim_{h\to 0^+}\frac{\v_{0,t}(z)-\v_{0,t}(z)}{h}=G(\v_{0,t}(z),t).
\]
Now, let $t\in [0,+\infty)\setminus (N_0\cup N_1)$. Let $f^t:\D
\to \C\cup\{\infty\}$ be any limit of $\{f_h^t\}$. By the
previous equation it follows that $f^t(\cdot)=G(\cdot,t)$ on
the open set $\v_{0,t}(\D)$. Therefore $f^t(\cdot)=G(\cdot,t)$
on $\D$ and this proves the theorem with $M=N_0\cup N_1$.
\end{proof}

Now we are in good shape to prove the remaining part of Theorem
\ref{main}:

\begin{theorem}\label{maindue}
Let $G(z,t)$ be a Herglotz vector field of order $d\in
[1,+\infty]$ and let $(\v_{s,t})$ be its associated evolution
family. If $(\v_{s,t})$ is commuting then $G(z,t)$ is
splitting.
\end{theorem}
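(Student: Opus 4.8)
The plan is to use Lemma~\ref{commutingbrac}: it suffices to show that $[G(\cdot,t),G(\cdot,s)]\equiv 0$ for almost every pair $s,t\in[0,+\infty)$, equivalently that the semigroups $(\phi^t_r)$ and $(\phi^s_r)$ generated by $G(\cdot,t)$ and $G(\cdot,s)$ commute for a.e.\ $t,s$. The idea is that these ``frozen-time'' semigroups should be recoverable as limits of compositions of the evolution family, and the commutativity hypothesis on $(\v_{s,t})$ should pass to the limit. First I would fix, using Proposition~\ref{techdiff}, a zero measure set $M\subset[0,+\infty)$ such that for every $t\notin M$ one has $\lim_{h\to 0^+}(\v_{t,t+h}(z)-z)/h=G(z,t)$ uniformly on compacta; in particular $G(\cdot,t)$ is an infinitesimal generator for such $t$ and generates a semigroup $(\phi^t_r)_{r\ge 0}$.

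The key step is to represent $\phi^t_r$ as a limit of iterated evolution-family maps ``based near $t$''. For $t\notin M$ and $n\in\N$, consider the composition
\[
\Psi^{t}_{n}(z):=\v_{t,\,t+1/n}\circ\v_{t,\,t+1/n}\circ\cdots\circ\v_{t,\,t+1/n}(z)\quad(\text{$n$ factors}),
\]
or more precisely, to obtain an arbitrary parameter $r\ge 0$, the $\lfloor rn\rfloor$-fold composition of $\v_{t,t+1/n}$ with itself. By Proposition~\ref{techdiff} each factor is $\id+\tfrac1n G(\cdot,t)+o(1/n)$, so this is an Euler-type approximation scheme for the autonomous flow of $G(\cdot,t)$; a standard argument (equicontinuity from the Schwarz--Pick lemma, together with the uniform-on-compacta convergence in Proposition~\ref{techdiff} and uniqueness of solutions of the autonomous ODE $\dot w=G(w,t)$) shows $\Psi^{t}_{n}\to\phi^t_r$ uniformly on compacta as $n\to\infty$. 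Now fix $s,t\notin M$. The evolution family is commuting, so $\v_{t,t+1/n}$ commutes with $\v_{s,s+1/n}$ for every $n$, hence so do their iterated compositions $\Psi^{t}_n$ (the $\lfloor rn\rfloor$-fold power of $\v_{t,t+1/n}$) and $\Psi^{s}_n$ (the $\lfloor rn\rfloor$-fold power of $\v_{s,s+1/n}$). Passing to the limit $n\to\infty$ yields $\phi^t_r\circ\phi^s_r=\phi^s_r\circ\phi^t_r$ for all $r\ge 0$ and all $s,t\notin M$. By condition (3) of Lemma~\ref{commutingbrac} this gives that $G(z,t)$ is splitting.

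The main obstacle is the passage to the limit in the approximation scheme: one must ensure that the $\lfloor rn\rfloor$-fold composition of $\v_{t,t+1/n}$ really converges to $\phi^t_r$, uniformly on compacta, using only the a.e.\ differentiability supplied by Proposition~\ref{techdiff} rather than a uniform estimate in $t$. This is handled by the classical Euler-scheme/Chernoff-type argument: the maps involved are holomorphic self-maps of $\D$, hence uniformly Lipschitz on compacta with respect to the hyperbolic metric (Schwarz--Pick), so the family $\{\Psi^t_n\}_n$ is normal and equicontinuous; any subsequential limit $\psi_r$ satisfies the functional equation $\psi_{r_1+r_2}=\psi_{r_1}\circ\psi_{r_2}$ and, differentiating at $r=0^+$ via Proposition~\ref{techdiff}, has infinitesimal generator $G(\cdot,t)$, hence equals $\phi^t_r$ by uniqueness of the semigroup generated by a given infinitesimal generator (Berkson--Porta). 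A minor point is that the relevant ``bad'' set of pairs $(s,t)$ for which the argument fails is $(M\times[0,\infty))\cup([0,\infty)\times M)$, which has zero planar measure, so the conclusion ``for almost every $s,t$'' in Lemma~\ref{commutingbrac} is met.
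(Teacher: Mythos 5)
Your proposal is correct and follows essentially the same route as the paper: Proposition~\ref{techdiff} shows $\v_{t,t+h}=\id+hG(\cdot,t)+o(h)$, the frozen-time semigroups $\phi^t_r$ are recovered as limits of iterates of these near-identity maps, commutativity of the evolution family passes to the limit, and Lemma~\ref{commutingbrac} concludes. The only cosmetic difference is that where you sketch a Chernoff/Euler-scheme convergence argument (with step $1/n$ and $\lfloor rn\rfloor$ iterates), the paper simply invokes the product formula of Reich--Shoikhet \cite[Theorem 6.12]{R-S2} with $\phi^t_r=\lim_{n\to\infty}(\v_{t,t+r/n})^{\circ n}$, which is exactly the standard result your ``main obstacle'' paragraph appeals to.
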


\begin{proof}
Let $g^t_h(z):=\v_{t,t+h}(z)$ for $t\geq 0$ and $0\leq h\leq
1$. Then $g^t_h\to \id$ as $h\to 0$. Moreover, by Proposition
\ref{techdiff} it follows
\[
\lim_{h\to 0^+} \frac{g_h^t(z)-z}{h}=G(z,t),
\]
uniformly on compacta of $\D$ for almost all $t\geq 0$.

Let $(\phi^t_r)$ be the semigroup associated with $G(\cdot,t)$
for $t\geq 0$ fixed (this is well defined for almost all $t\geq
0$). By the product formula (see \cite[Theorem 6.12]{R-S2}) we
have
\[
\phi^t_r=\lim_{n\to \infty} (g^t_{r/n})^{\circ n},
\]
where the limit is uniform on compacta of $\D$. Hence, for
almost all $s\neq t$ and for all $r\geq 0$ we have
\[
\phi_r^t \circ \phi_r^s =\lim_{m,n\to \infty} (g^t_{r/n})^{\circ n}\circ (g^s_{r/m})^{\circ m}
= \lim_{m,n\to \infty} (g^s_{r/n})^{\circ n}\circ (g^t_{r/m})^{\circ m}=\phi_r^s \circ \phi_r^t.
\]
Lemma \ref{commutingbrac} implies that $G(z,t)$ is splitting.
\end{proof}

\section{Reversing evolution families}\label{rever}

\begin{definition}
An evolution family $(\v_{s,t})$ is called {\sl reversing} if
$\v_{s,t}=\v_{s,u}\circ \v_{u,t}$ for all
  $0\leq s\leq u\leq t<+\infty$.
\end{definition}

\begin{remark}
Note that if $(\v_{s,t})$ is a commuting evolution family then
it is also reversing because $\v_{u,t}\circ
\v_{s,u}=\v_{s,t}=\v_{s,u}\circ \v_{u,t}$ for all $0\leq s\leq
u\leq t<+\infty$.
\end{remark}

Now we study common fixed points of  reversing evolution
families. First, we show that, although in principle a
reversing family is not commuting, one can always find a finite
chain of  mappings such that each commutes with the previous
one, relating any two elements of the family.

\begin{lemma}\label{commutafam}
Let $(\v_{s,t})$ be a reversing evolution family. Then for any
$0\leq u\leq v<+\infty$ and $0\leq s\leq t<+\infty$ such that
$\v_{s,t}\neq \id$ and $\v_{u,v}\neq \id$ there exist $1\leq
m\leq 4$ and $\{(s_0,t_0),\ldots, (s_m,t_m)\}$ such that
$s_0=u$, $t_0=v$, $s_m=s$, $t_m=t$, $0\leq s_j\leq
t_j<+\infty$, $\v_{s_j,t_j}\neq \id$ for all $j=0,\ldots, m$,
and
\[
\v_{s_j,t_j}\circ
\v_{s_{j+1},t_{j+1}}=\v_{s_{j+1},t_{j+1}}\circ\v_{s_j,t_j}
\]
for $j=0,\ldots, m-1$.
\end{lemma}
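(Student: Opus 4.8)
The plan is to exploit the reversing identity $\v_{s,t}=\v_{s,u}\circ\v_{u,t}$ together with the forward semigroup law EF2 and Lemma \ref{centro} to build the chain explicitly. The key observation is that if two ``rectangles'' $[s,t]$ and $[u,v]$ overlap in their time intervals (say $s\le u\le t$ or more generally the intervals are not disjoint), then one of $\v_{s,t},\v_{u,v}$ factors through the other in a way that forces commutation: for instance if $s\le u\le v\le t$, then EF2 gives $\v_{s,t}=\v_{v,t}\circ\v_{u,v}\circ\v_{s,u}$, while the reversing property lets us rearrange so that $\v_{u,v}$ commutes with $\v_{s,t}$. More precisely, I would first prove the following local claim: if $0\le a\le b\le c\le d<+\infty$ with $\v_{a,d}\ne\id$ and $\v_{b,c}\ne\id$, then $\v_{a,d}$ and $\v_{b,c}$ commute. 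Indeed, from reversing, $\v_{a,d}=\v_{a,b}\circ\v_{b,d}=\v_{a,b}\circ\v_{b,c}\circ\v_{c,d}$ and also $\v_{a,d}=\v_{a,c}\circ\v_{c,d}=\v_{a,b}\circ\v_{b,c}\circ\v_{c,d}$; one then checks that $\v_{b,c}\circ\v_{a,d}=\v_{a,d}\circ\v_{b,c}$ by substituting these factorizations and cancelling using injectivity of the elements of an evolution family (they are univalent by \cite[Corollary 6.3]{BCM1}) together with EF2. [This nested case is really the engine.]

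Next I would handle the general position of the two intervals $[s,t]$ and $[u,v]$ by a short case analysis on how they sit on the half-line, in each case inserting at most three or four intermediate rectangles of the form $[s_j,t_j]$ whose intervals are nested or share an endpoint with neighbours, so that consecutive ones commute by the local claim. The natural intermediate choices are: the ``hull'' interval $[\min(s,u),\max(t,v)]$, the two original intervals, and when the intervals are disjoint, say $t\le u$, an interval such as $[s,v]$ straddling both; each pair of consecutive rectangles in the resulting chain is then in nested position. One must be careful that the intermediate maps are not the identity: if some candidate $\v_{s_j,t_j}=\id$ one simply drops it from the chain (consecutive commutation with the identity is automatic and the chain shortens), which is why the bound is stated as $1\le m\le 4$ rather than an equality. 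Bookkeeping the worst case (disjoint intervals, neither degenerate) gives the chain $(u,v)\to(\min,\max)\to(s,t)$ with possibly one more insertion, i.e.\ $m\le 4$.

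The main obstacle I anticipate is not the combinatorics but the local claim: proving that two nested elements commute requires squeezing real content out of the reversing identity and EF2, not just formal rearrangement. The delicate point is that cancellation of maps like $\v_{a,b}$ on the left and $\v_{c,d}$ on the right is legitimate because these maps are injective, but one must make sure the equation one is cancelling in actually holds on all of $\D$ and that the composition orders line up; it may be cleanest to pass to the level of the associated Loewner chain $(f_t)$ from Theorem \ref{pav}, where $\v_{s,t}=f_t^{-1}\circ f_s$, since there the reversing property becomes $f_t^{-1}\circ f_s=f_t^{-1}\circ f_u\circ f_u^{-1}\circ f_s$ — tautological — so instead one should phrase reversing as the genuine constraint it is and track images. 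A viable alternative for the local claim is to use Lemma \ref{centro}: show first that some $\v_{a,b}$ with small $b-a$ generates, via the product formula as in the proof of Theorem \ref{maindue}, and that nestedness forces the relevant centralizer membership; but the direct algebraic argument above, done carefully with injectivity, should suffice and is what I would attempt first.
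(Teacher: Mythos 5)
Your ``local claim'' (nested elements commute) is true, and the hull chain $(u,v)\to(\min,\max)\to(s,t)$ would indeed be a legitimate, generically shorter alternative to the paper's chain, which instead bridges only through elements with a common endpoint: the paper's sole algebraic engine is that reversing plus EF2 force $\v_{l,r}\circ\v_{r,n}=\v_{r,n}\circ\v_{l,r}$ for $l\leq r\leq n$ (compare the two factorizations of $\v_{l,n}$), and then it connects $\v_{u,v}\leftrightarrow\v_{v,t}\leftrightarrow\v_{t,r}\leftrightarrow\v_{s,t}$. Note, however, that your sketch of the local claim does not yet prove it: the two ``factorizations'' of $\v_{a,d}$ you display are literally the same expression (both come from reversing), so substituting one into the other and cancelling yields nothing. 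What actually works is to write $\v_{a,d}=\v_{a,b}\circ\v_{b,c}\circ\v_{c,d}$ and use the adjacent commutations $\v_{b,c}\circ\v_{a,b}=\v_{a,b}\circ\v_{b,c}$ and $\v_{c,d}\circ\v_{b,c}=\v_{b,c}\circ\v_{c,d}$ on the two sides of $\v_{b,c}\circ\v_{a,d}$ versus $\v_{a,d}\circ\v_{b,c}$; no injectivity or cancellation is needed, but the adjacent commutation --- i.e.\ exactly the paper's engine --- is.

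The genuine gap is your treatment of identity intermediates. You cannot ``simply drop'' a link $\v_{s_j,t_j}=\id$ from the chain: the chain property is commutation of \emph{consecutive} elements, and after deleting a link there is no reason its two former neighbours commute --- if that were automatic, the lemma would trivially hold with $m=1$ always, which is precisely what is not known. The hypothesis $\v_{s_j,t_j}\neq\id$ is essential because the chain is later fed into Lemma \ref{centro} and Behan's theorem, which say nothing about the identity. And the degenerate case genuinely occurs: e.g.\ for $G(z,t)=g(t)\,iz$ with $g$ real and sign-changing one has $\v_{a,b}=\id$ whenever $\int_a^b g\,d\xi=0$, while nearby elements are nontrivial; so a proof must face the possibility $\v_{\min,\max}=\id$. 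This is where the paper spends essentially all of its effort: when the natural bridges are the identity it manufactures replacements (a $t'\in(v,t)$ with $\v_{v,t'}\neq\id\neq\v_{t',t}$, or some $\v_{t,r}\neq\id$ with $r\geq t$), and it excludes the bad alternative by showing, via \eqref{main-eq} and \eqref{main-eq2}, that otherwise $G(\cdot,r)\equiv 0$ for almost every $r$ in the relevant interval, contradicting $\v_{s,t}\neq\id$. In your scheme the analogous work would be to prove that if the hull element is the identity then the two given maps already commute (this is in fact true, but requires an argument --- for instance, all factors are then automorphisms of $\D$ by Schwarz-Pick rigidity and one computes directly using the adjacent commutations --- or an ODE argument in the paper's style). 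As written, the ``drop it'' step is a non sequitur, and it leaves a hole exactly at the hard point of the lemma.
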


\begin{proof}
Let $0\leq u\leq v<+\infty$ and $0\leq s\leq t<+\infty$ be such
that $\v_{u,v}$ and $\v_{s,t}$ are not the identity and they do
not commute (otherwise the result is true with $m=1$). We can
assume that $v\leq t$. First, let $v< t$.

By hypothesis of reversing, $\v_{l,r}$ commutes with $\v_{r,n}$
for all $0\leq l\leq r\leq n<+\infty$.

Hence $\v_{u,v}$ commutes with  $\v_{v,n}$ for $n\geq v$. In
particular, it commutes with $\v_{v,t}$. Suppose that
$\v_{v,t}\neq \id$. Then $\v_{v,t}$ commutes with $\v_{t,r}$
for all $r\geq t$. Also, $\v_{s,t}$ commutes with $\v_{t,r}$
for all $r\geq t$. If $\v_{t,r}=\id$ for all $t\leq r$ then by
\eqref{main-eq} it follows that
\begin{equation}\label{Gcero}
0=\frac{\de \v_{t,r}(z)}{\de r}=G(\v_{t,r}(z),r)=G(z,r)
\end{equation}
for almost every $r\geq t$. Hence $G(z ,r)\equiv 0$ for almost
every $r\geq t$. Therefore, again by \eqref{main-eq},
$\v_{s,r}=\id$ for all $r\geq t$, hence $\v_{s,t}=\id$,
contradicting our hypothesis. Therefore, if $\v_{v,t}\neq \id$
the result is proven with $m=3$.

Assume that $\v_{v,t}=\id$. We claim that there exists $v<t'<t$
such that $\v_{v,t'}\neq \id$, $\v_{t',t}\neq \id$. If this is
the case, then $\v_{u,v}$ commutes with $\v_{v,t'}$ which
commutes with $\v_{t',t}$ which commutes with $\v_{t,r}$ for
all $r\geq t$ and such elements---which cannot be all
$\equiv\id$ as we saw before---commute with $\v_{s,t}$,
concluding the result with $m=4$.

We need to show that we can choose $v<t'<t$ such that
$\v_{v,t'}\neq \id$, $\v_{t',t}\neq \id$. Indeed, arguing by
contradiction, let $N:=\{r\in[v,t]:\v_{v,r}=\id\}$ and
$M:=[v,t]\setminus N$. Thus $\v_{r,t}=\id$ for all $r\in M$.
Hence $\frac{\de \v_{r,t}}{\de r}=0$ for almost all $r\in M$
and $\frac{\de \v_{v,r}}{\de r}=0$ for almost all $r\in N$. The
last condition, as in \eqref{Gcero}, implies that $G(z,r)\equiv
0$ for almost every $r\in N$. By \eqref{main-eq2} we have also
that for almost every $r\in M$
\[
0=\frac{\de \v_{r,t}}{\de r}=-\v_{r,t}'(z) G(z,r)=-G(z,r),
\]
hence $G(z,r)\equiv 0$ for almost every $r\in M$. Since
$[v,t]=M\cup N$, we have $G(z,r)\equiv 0$ for almost every
$r\in [v,t]$. Therefore, by \eqref{main-eq}, $\v_{s,r}=\id$ for
all $r\in [v,t]$. Thus $\v_{s,t}=\id$ against our hypothesis.

Finally, the case $v=t$ follows easily by noting that
$\v_{u,t}, \v_{s,t}$ commute with $\v_{t,r}$ for any $r\geq t$.
\end{proof}

The previous lemma has several interesting consequences. We
start with the following result about hyperbolic automorphisms:

\begin{proposition}\label{coro-iper-grup}
Let $(\v_{s,t})$ be a reversing evolution family such that
$\v_{u,v}$ is a hyperbolic automorphism of $\D$ for some $0\leq
  u< v<+\infty$. Then for all $0\leq
  s< t<+\infty$ with $\v_{s,t}\neq\id$ it follows
that $\v_{s,t}$ is a hyperbolic automorphism of $\D$. Moreover
if $G(z,t)$ is the associated Herglotz vector field of
$(\v_{s,t})$  then $G(z,t)$ is splitting and the family is
commuting. In
  particular, there exist two distinct points $\tau,\tau'\in\de
  \D$ such that $\v_{s,t}(\tau)=\tau, \v_{s,t}(\tau')=\tau'$
  for all $0\leq s\leq t<+\infty$.
\end{proposition}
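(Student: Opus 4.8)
The plan is to bootstrap from the single hyperbolic automorphism $\v_{u,v}$ to the whole family by walking along the finite commuting chain produced by Lemma \ref{commutafam} and applying the centralizer description of Lemma \ref{centro}(1) at each link. Since $\v_{u,v}$ is a hyperbolic automorphism of $\D$, it has two distinct fixed points $\tau,\tau'\in\de\D$ and is different from $\id$; these $\tau,\tau'$ will be the points claimed in the statement.

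First I would fix $0\le s< t<+\infty$ with $\v_{s,t}\neq\id$ and invoke Lemma \ref{commutafam} (legitimate since $(\v_{s,t})$ is reversing and neither $\v_{u,v}$ nor $\v_{s,t}$ is the identity): it provides $1\le m\le 4$ and pairs $(s_0,t_0)=(u,v),(s_1,t_1),\dots,(s_m,t_m)=(s,t)$ with $\v_{s_j,t_j}\neq\id$ for all $j$ and $\v_{s_j,t_j}\circ\v_{s_{j+1},t_{j+1}}=\v_{s_{j+1},t_{j+1}}\circ\v_{s_j,t_j}$. I then prove by induction on $j$ that each $\v_{s_j,t_j}$ is a hyperbolic automorphism of $\D$ with fixed points $\tau,\tau'$. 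The base case $j=0$ is the hypothesis; for the inductive step, $\v_{s_{j+1},t_{j+1}}$ lies in the centralizer of $\v_{s_j,t_j}$ and is not the identity, so Lemma \ref{centro}(1) forces it to be a hyperbolic automorphism with the very same fixed points $\tau,\tau'$. Taking $j=m$ shows $\v_{s,t}$ is a hyperbolic automorphism fixing $\tau$ and $\tau'$. Since $\v_{s,s}=\id$ trivially fixes $\tau$ and $\tau'$ as well, this establishes the first assertion of the Proposition and, in particular, that $\v_{s,t}(\tau)=\tau$ and $\v_{s,t}(\tau')=\tau'$ for all $0\le s\le t<+\infty$.

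Next I would deduce that the family is commuting. Conjugating $\D$ to the upper half-plane by a M\"obius map carrying $\tau$ to $0$ and $\tau'$ to $\infty$, every hyperbolic automorphism of $\D$ fixing $\tau$ and $\tau'$ becomes a dilation $w\mapsto\mu w$ with $\mu>0$; hence such automorphisms, together with $\id$, form an abelian group. By the previous step every $\v_{m,n}$ and every $\v_{s,t}$ belongs to this group, so $\v_{m,n}\circ\v_{s,t}=\v_{s,t}\circ\v_{m,n}$ for all admissible indices, i.e.\ $(\v_{s,t})$ is a commuting evolution family. Finally, Theorem \ref{maindue} applied to this commuting family yields that its associated Herglotz vector field $G(z,t)$ is splitting, which completes the proof.

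I do not expect a serious obstacle: the argument is an assembly of Lemmas \ref{commutafam} and \ref{centro} together with Theorem \ref{maindue}. The only point requiring care is to keep track, at every link of the chain, that the map being passed to is genuinely not the identity, so that Lemma \ref{centro}(1) truly applies and propagates ``hyperbolic automorphism with fixed points $\tau,\tau'$'' from one link to the next; this is exactly the clause $\v_{s_j,t_j}\neq\id$ that Lemma \ref{commutafam} was designed to carry along.
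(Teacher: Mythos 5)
Your proposal is correct and follows essentially the same route as the paper: the chain from Lemma \ref{commutafam}, Lemma \ref{centro}(1) propagated along the chain to conclude that every non-identity element is a hyperbolic automorphism with the same fixed points $\tau,\tau'$, and then Theorem \ref{maindue} to obtain splitting. You merely spell out two steps the paper leaves implicit (the induction along the chain and the commutativity via conjugation to dilations fixing $\tau,\tau'$), which is fine; the paper only adds, as a remark, that splitting can also be seen directly from the form $\lambda(s,t)w$ in a half-plane model together with the Loewner ODE.
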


\begin{proof}
Let $\v_{s,t}\neq \id$. Let $\{\v_{s_0,t_0},\ldots,
\v_{s_m,t_m}\}$ be a chain  such that $\v_{s_j,t_j}\neq \id$
for all $j=0,\ldots, m$, $\v_{s_0,t_0}=\v_{u,v}$,
$\v_{s_m,t_m}=\v_{s,t}$ and $\v_{s_j,t_j}\circ
\v_{s_{j+1},t_{j+1}}=\v_{s_{j+1},t_{j+1}}\circ\v_{s_j,t_j}$ for
$j=0,\ldots, m-1$. By Lemma \ref{commutafam} such a chain
exists. By Lemma \ref{centro}.(1) $\v_{s,t}$ commutes with
$\v_{u,v}$ and it is a hyperbolic automorphism with the same
fixed points. By Theorem \ref{maindue} the associated Herglotz
vector field is splitting. We note that one can even prove
directly the last assertion without applying Theorem
\ref{maindue}. In fact, moving to the right half plane by means
of a conjugation with a Cayley transform, one sees that all the
elements of the evolution family are of the form
$\lambda(s,t)w$, and hence by \eqref{main-eq}, we see that
$G(z,t)$ is splitting.
\end{proof}

In case there are no hyperbolic automorphisms in a reversing
family we get:

\begin{proposition}\label{comrev}
Let $(\v_{s,t})$ be a reversing evolution family. Suppose that
$\v_{s,t}$ is not a hyperbolic automorphism of $\D$ for all
  $0\leq s\leq t<+\infty$. Then there exists $\tau\in\oD$
such that $\tau$ is the Denjoy-Wolff point of $\v_{s,t}$ for
all $0\leq s\leq t<+\infty$ with $\v_{s,t}(z)\not\equiv z$.
\end{proposition}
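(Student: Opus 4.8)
The plan is to combine Lemma \ref{commutafam} with the classical fact that two commuting non‑identity holomorphic self‑maps of $\D$ that are not both hyperbolic automorphisms share the same Denjoy‑Wolff point; the reversing hypothesis enters only through Lemma \ref{commutafam}, and the hypothesis that the family contains no hyperbolic automorphism is what rules out the single exceptional configuration.

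In detail: if $\v_{s,t}=\id$ for all $0\le s\le t<+\infty$ there is nothing to prove, so I fix some $\v_{u_0,v_0}\neq\id$, let $\tau\in\oD$ be its Denjoy‑Wolff point, and show every $\v_{s,t}\neq\id$ has Denjoy‑Wolff point $\tau$. Given such $\v_{s,t}$, Lemma \ref{commutafam} provides $1\le m\le 4$ and pairs $(s_0,t_0)=(u_0,v_0),\dots,(s_m,t_m)=(s,t)$ with each $\v_{s_j,t_j}\neq\id$ and $\v_{s_j,t_j}$ commuting with $\v_{s_{j+1},t_{j+1}}$ for $j=0,\dots,m-1$. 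Since no element of the family is a hyperbolic automorphism, no consecutive pair consists of two hyperbolic automorphisms, so it suffices to know the following claim: if $f,g:\D\to\D$ are holomorphic with $f\circ g=g\circ f$, $f\neq\id\neq g$, and $f,g$ not both hyperbolic automorphisms, then $f$ and $g$ have the same Denjoy‑Wolff point. Applying this claim successively to the pairs $(\v_{s_j,t_j},\v_{s_{j+1},t_{j+1}})$ yields that all the $\v_{s_j,t_j}$, in particular $\v_{s,t}$, have Denjoy‑Wolff point $\tau$, which finishes the proof.

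For the claim itself I would first treat the case in which $f$ has a fixed point $\tau_0\in\D$: then $f(g(\tau_0))=g(f(\tau_0))=g(\tau_0)$, so $g(\tau_0)\in\D$ is a fixed point of $f$, and since $f\neq\id$ has at most one fixed point in $\D$ we get $g(\tau_0)=\tau_0$; hence $\tau_0$ is the Denjoy‑Wolff point of both $f$ and $g$. By symmetry the same holds when $g$ has an interior fixed point, so one is reduced to the case in which both $f$ and $g$ have their Denjoy‑Wolff points on $\de\D$. There the statement belongs to the classical theory of commuting holomorphic self‑maps of the disc: when $f$ is a non‑automorphism it can be proved along the lines of the proof of Lemma \ref{centro}, using the uniqueness of the intertwining function for hyperbolic maps \cite{Br1} and for zero‑step parabolic maps \cite{CoDiPo} to see that $g$ must fix the Denjoy‑Wolff point of $f$; the remaining (automorphism, and positive‑step parabolic) cases are covered by \cite{Cow} (cf.\ also \cite{Abate}).

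The step I expect to be the main obstacle is precisely this last boundary case, specifically parabolic maps of positive hyperbolic step: Lemma \ref{centro} only gives abelianity of certain centralizers, whereas here one needs the sharper conclusion that commutation forces the boundary Denjoy‑Wolff points to coincide, and in the positive‑step parabolic case the centralizer need not even be abelian, so one has to rely on the finer results on commuting functions rather than on Lemma \ref{centro} alone. This is also exactly the point where the exclusion of hyperbolic automorphisms is essential, since a hyperbolic automorphism and its inverse commute but have different Denjoy‑Wolff points.
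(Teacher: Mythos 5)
Your argument is correct and is essentially the paper's proof: the paper likewise uses Lemma \ref{commutafam} to produce the commuting chain and then invokes Behan's theorem \cite{Behan}, which is exactly the classical claim you isolate (two commuting non-identity holomorphic self-maps of $\D$ share their Denjoy-Wolff point unless both are hyperbolic automorphisms). The only difference is that the paper cites \cite{Behan} directly rather than re-proving the interior fixed point case and deferring the boundary (in particular positive-step parabolic) case to \cite{Cow}.
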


\begin{proof}
Let $\v_{s,t}, \v_{u,t}\neq \id$. Let $\{\v_{s_0,t_0},\ldots,
\v_{s_m,t_m}\}$ be a chain  such that $\v_{s_j,t_j}\neq \id$
for all $j=0,\ldots, m$, $\v_{s_0,t_0}=\v_{u,v}$,
$\v_{s_m,t_m}=\v_{s,t}$ and $\v_{s_j,t_j}\circ
\v_{s_{j+1},t_{j+1}}=\v_{s_{j+1},t_{j+1}}\circ\v_{s_j,t_j}$ for
$j=0,\ldots, m-1$. By Lemma \ref{commutafam} such a chain
exists with $m\leq 4$. By Behan's theorem \cite{Behan} it
follows that either $\v_{s_j,t_j}, \v_{s_{j+1},t_{j+1}}$ are
hyperbolic automorphisms or they share the same Denjoy-Wolff
point. By hypothesis there are no hyperbolic automorphisms and
hence the result is proved.
\end{proof}

Next we show that in many cases a reversing evolution family is
commuting:

\begin{theorem}\label{revcom}
Let $(\v_{s,t})$ be a reversing evolution family. Suppose that
one of the following holds:
\begin{enumerate}
  \item there exists $0\leq u\leq v<+\infty$ such that
  $\v_{u,v}$ is elliptic,
  \item there exists $0\leq u\leq v<+\infty$ such that
  $\v_{u,v}$ is hyperbolic,
  \item  for all $0\leq u\leq v<+\infty$ such that $\v_{u,v}\neq \id$, the maps $\v_{u,v}$
  are parabolic of zero hyperbolic step.
\end{enumerate}
Then $(\v_{s,t})$ is commuting.
\end{theorem}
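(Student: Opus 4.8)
The plan is to combine the commuting chains of Lemma~\ref{commutafam} with a propagation of dynamical type. The key reduction is the following elementary ``chain-collapse'' remark: if, in a reversing evolution family, \emph{every} non-identity element $\v_{s,t}$ has abelian centralizer $C(\v_{s,t})$, then the family is commuting. Indeed, given non-identity $\v_{u,v}$ and $\v_{s,t}$, Lemma~\ref{commutafam} produces a chain $\v_{u,v}=\psi_0,\psi_1,\dots,\psi_m=\v_{s,t}$ of non-identity elements with consecutive terms commuting and $m\le 4$; since $\psi_0,\psi_2\in C(\psi_1)$ and $C(\psi_1)$ is abelian, $\psi_0$ commutes with $\psi_2$, which shortens the chain to $\psi_0,\psi_2,\psi_3,\dots,\psi_m$, and iterating (using that $C(\psi_1),\dots,C(\psi_{m-1})$ are abelian) gives $\v_{u,v}\circ\v_{s,t}=\v_{s,t}\circ\v_{u,v}$; identity elements commute with everything. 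Case~(3) is then immediate: by hypothesis every non-identity element is parabolic of zero hyperbolic step, so Lemma~\ref{centro}(3) gives abelian centralizers.

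For case~(2): if some non-identity element is a hyperbolic automorphism, Proposition~\ref{coro-iper-grup} already gives that the family is commuting. Otherwise no non-identity element is a hyperbolic automorphism, so $\v_{u,v}$ is a hyperbolic non-automorphism, and Proposition~\ref{comrev} provides one common Denjoy-Wolff point $\tau$, which lies in $\de\D$ since $\v_{u,v}$ is hyperbolic; in particular no non-identity element is elliptic. I would then propagate hyperbolicity along the chain from $\v_{u,v}$: if $f$ is a hyperbolic non-automorphism with linearizing map $\sigma$ and $g\ne\id$ commutes with $f$, then $\sigma\circ g$ is again a linearizing map of $f$, so $\sigma\circ g=c\,\sigma$ with $c\ne 1$ by the uniqueness of the intertwining function for hyperbolic maps (\cite{Br1}, see also \cite{Cow}), and this forces $g$ to be hyperbolic. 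Hence every non-identity element is a hyperbolic non-automorphism, Lemma~\ref{centro}(2) applies, and the chain-collapse remark finishes the case.

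For case~(1): let $\tau\in\D$ be the Denjoy-Wolff point of $\v_{u,v}$. A map commuting with a non-identity self-map whose only interior fixed point is $\tau$ must carry $\tau$ to a fixed point of that map, hence fix $\tau$; propagating along the chain from $\v_{u,v}$, every non-identity $\v_{s,t}$ fixes $\tau$, and we may assume $\tau=0$. If no non-identity element is an automorphism, then every non-identity element is an elliptic non-automorphism, Lemma~\ref{centro}(2) applies, and we conclude as before. If some non-identity $\v_{c,d}$ is an automorphism it is a rotation $z\mapsto e^{i\theta_0}z$ with $e^{i\theta_0}\ne 1$; for $c\le t\le d$ the map $\v_{c,t}$ is an automorphism (a univalent factor of the automorphism $\v_{c,d}=\v_{t,d}\circ\v_{c,t}$) which commutes with $\v_{c,d}$ (it commutes with $\v_{c,t}$ and with $\v_{t,d}$), hence fixes the unique interior fixed point $0$ of $\v_{c,d}$ and so is a rotation $z\mapsto e^{i\theta(t)}z$ about $0$, with $\theta$ continuous (by EF3) and $\theta(c)=0$; since $e^{i\theta(d)}\ne 1$ we may pick $t^*$ with $e^{i\theta(t^*)}$ not a root of unity, giving an irrational rotation $R:=\v_{c,t^*}$ in the family. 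But $C(R)=\{z\mapsto\lambda z\}$, and from the reversing relations one checks $\v_{c,t}\circ R=R\circ\v_{c,t}$ for all $t$ and $\v_{s,c}\circ R=R\circ\v_{s,c}$ for all $s$, after which every $\v_{s,t}$ is a composition of such maps and of their (local) inverses, hence again of the form $z\mapsto\lambda z$; all these commute, so the family is commuting.

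I expect the propagation-of-type steps to be the main obstacle: ensuring in case~(2) that commuting with a hyperbolic non-automorphism rules out parabolicity --- this is where the uniqueness of the hyperbolic intertwiner is indispensable --- and in case~(1) controlling the possibility of a commuting chain running through a finite-order elliptic automorphism, whose centralizer need not be abelian. The latter is precisely what forces the detour showing that the mere presence of a non-identity elliptic automorphism collapses the whole family into the abelian group $\{z\mapsto\lambda z:\,0<|\lambda|\le 1\}$, and checking carefully that every element commutes with the irrational rotation $R$ is the most delicate bookkeeping in the argument.
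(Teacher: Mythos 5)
Your overall strategy is the same as the paper's (chains from Lemma~\ref{commutafam} collapsed via the abelian centralizers of Lemma~\ref{centro}, with the automorphism cases treated separately), and cases (1) and (3) are fine; indeed your treatment of case (1) is more careful than the paper's, since you explicitly deal with the possibility that the family contains a finite-order rotation --- whose centralizer is not abelian --- by producing an irrational rotation $R=\v_{c,t^*}$ inside the family and checking directly that every $\v_{c,t}$ and $\v_{s,c}$ lies in $C(R)=\{z\mapsto\lambda z\}$, which forces all elements to be linear. That bookkeeping is correct.

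The genuine gap is in case (2), at the step ``$\sigma\circ g=c\,\sigma$ with $c\neq 1$, and this forces $g$ to be hyperbolic.'' The uniqueness of the Valiron intertwiner does give $\sigma\circ g=c\sigma$ with $c>0$, and $c\neq 1$ follows from univalence of $\sigma$; but from this relation alone you only get that $g$ has no fixed point in $\D$ (since $\sigma$ omits $0$) and that $g$ has \emph{positive} hyperbolic step, because $\omega(g^{\circ n}(z),g^{\circ(n+1)}(z))\geq \omega_{\Hr}(c^n\sigma(z),c^{n+1}\sigma(z))=\omega_{\Hr}(\sigma(z),c\sigma(z))>0$. This does not exclude that $g$ is parabolic of positive hyperbolic step: the relation $\sigma\circ g=c\sigma$ gives no control on the geometry of $\sigma(\D)$ near the Denjoy--Wolff point, so you cannot read off the angular derivative $g'(\tau)$ from it, and ruling out $g'(\tau)=1$ is exactly the nontrivial content of the rigidity theorems for commuting functions. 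The statement you need is true, but it is Cowen's result (\cite[Corollary 4.2]{Cow}, and the type statement \cite[Corollary 4.1]{Cow} which the paper itself invokes in the Remark after the theorem), not a formal consequence of the uniqueness of the intertwiner. The cleanest repair inside your scheme: in case (2), after excluding hyperbolic automorphisms and obtaining the common Denjoy--Wolff point $\tau\in\de\D$ from Proposition~\ref{comrev}, note that if some non-identity element were parabolic, then propagating along a chain from Lemma~\ref{commutafam} with \cite[Corollary 4.1]{Cow} would make $\v_{u,v}$ parabolic, a contradiction; hence every non-identity element is a hyperbolic non-automorphism, Lemma~\ref{centro}(2) gives abelian centralizers, and your chain-collapse remark concludes.
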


\begin{proof}
In case $\v_{u,v}$ is a hyperbolic automorphism the result
follows from Proposition \ref{coro-iper-grup}. In case  all
$\v_{s,t}\neq \id$ are elliptic automorphisms, by Proposition
\ref{comrev}, there exists $\tau\in \D$ which is a common fixed
point for all the family. Thus, up to conjugation with a fixed
automorphism which maps $\tau$ to $0$, we can assume that
$\tau=0$. Hence $\v_{s,t}(z)=\lambda(s,t) z$ for some
$|\lambda(s,t)|=1$ and the family is commuting.

We can assume that $\v_{u,v}$ is not a hyperbolic or elliptic
automorphism (but note that we are not excluding it can be a
parabolic automorphism). Let $\v_{s,t}\neq \id$. Let
$\{\v_{s_0,t_0},\ldots, \v_{s_m,t_m}\}$ be a chain of minimal
length such that $\v_{s_j,t_j}\neq \id$ for all $j=0,\ldots,
m$, $\v_{s_0,t_0}=\v_{u,v}$, $\v_{s_m,t_m}=\v_{s,t}$ and
$\v_{s_j,t_j}\circ
\v_{s_{j+1},t_{j+1}}=\v_{s_{j+1},t_{j+1}}\circ\v_{s_j,t_j}$ for
$j=0,\ldots, m-1$. By Lemma \ref{commutafam} such a chain
exists with $m\leq 4$. By Lemma \ref{centro} it must be $m=1$,
hence $\v_{s,t}$ commutes with $\v_{u,v}$. Thus
$(\v_{s,t})\subset C(\v_{u,v})$, the centralizer of $\v_{u,v}$.
Again by Lemma \ref{centro} such a centralizer is abelian, and
hence the family is indeed commuting.
\end{proof}

\begin{remark}
Note that if a reversing evolution family $(\v_{s,t})$ contains
a parabolic element $\v_{u,v}$ then for all $0\leq s\leq
t<+\infty$ such that $\v_{s,t}\neq \id$ it follows that
$\v_{s,t}$ is parabolic (but the hyperbolic step can be zero or
positive). This follows at once by Lemma \ref{commutafam} and
\cite[Corollary 4.1]{Cow}.
\end{remark}

Theorem \ref{revcom} together with Theorem \ref{main} implies
that the Herglotz vector field of a reversing evolution family
satisfying the hypothesis of Theorem \ref{revcom}  is
splitting. In the elliptic and hyperbolic cases, such a result
can be proved directly by looking at the Herglotz vector field.
We provide here such a proof, which can be  also extended to
the parabolic case at the price of assuming some regularity for
the vector field.

First, we relate reversing with a property of the Herglotz
vector field.

\begin{lemma}\label{revinG}
Let $(\v_{s,t})$ be an evolution family associated with the
Herglotz vector field $G(z,t)$. Then the following conditions
are equivalent
\begin{enumerate}
  \item $(\v_{s,t})$ is reversing.
  \item $G(\v_{s,t}(z),u)=\v_{s,t}'(z)G(z,u)$ for every $s,t$ and almost every $u$ such that $0\leq s\leq u\leq
  t<+\infty$.
\end{enumerate}
\end{lemma}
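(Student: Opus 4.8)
The plan is to prove the two implications by differentiating the defining relations in the variable $u$. For $(1)\Rightarrow(2)$: assume $(\v_{s,t})$ is reversing, so $\v_{s,t}=\v_{s,u}\circ \v_{u,t}$ for all $0\leq s\leq u\leq t$. Fix $s,t$ and regard both sides as functions of $u\in[s,t]$. The left-hand side is constant in $u$, so its derivative (where it exists) is zero. On the right, I would differentiate $u\mapsto \v_{s,u}(\v_{u,t}(z))$ using the chain rule together with the two differentiation formulas from Theorem \ref{LODE}: equation \eqref{main-eq} gives $\partial_u \v_{u,t}(z)$ — but wait, \eqref{main-eq} differentiates in the \emph{second} variable, while here $u$ is the \emph{first} variable of $\v_{u,t}$; for that I use \eqref{main-eq2}, namely $\partial_u \v_{u,t}(z)=-\v_{u,t}'(z)G(z,u)$ for a.e.\ $u$. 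For the outer map $\v_{s,u}$, differentiating in its second slot via \eqref{main-eq} gives $G(\v_{s,u}(w),u)$ evaluated at $w=\v_{u,t}(z)$. Putting these together, for a.e.\ $u\in(s,t)$,
\[
0=\frac{\partial}{\partial u}\v_{s,u}(\v_{u,t}(z))
= G\bigl(\v_{s,u}(\v_{u,t}(z)),u\bigr)+\v_{s,u}'(\v_{u,t}(z))\cdot\bigl(-\v_{u,t}'(z)G(z,u)\bigr).
\]
Now use $\v_{s,u}(\v_{u,t}(z))=\v_{s,t}(z)$ and $\v_{s,u}'(\v_{u,t}(z))\,\v_{u,t}'(z)=\v_{s,t}'(z)$ (chain rule applied to the reversing identity itself), which yields $G(\v_{s,t}(z),u)=\v_{s,t}'(z)G(z,u)$, i.e.\ condition (2). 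One must be slightly careful that the exceptional null set of $u$'s can be chosen uniformly in $z$ (using that the convergence in \eqref{main-eq}, \eqref{main-eq2} is locally uniform, cf.\ Proposition \ref{techdiff}) and independent of the pair $(s,t)$ on a countable dense set, then extend by continuity.

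For $(2)\Rightarrow(1)$: fix $s\leq t$ and, for $u\in[s,t]$, define $\psi(u,z):=\v_{s,u}(\v_{u,t}(z))$. I want to show $\psi(u,z)\equiv \v_{s,t}(z)$, and it suffices to show $\psi(\cdot,z)$ is (locally absolutely) constant in $u$, since $\psi(t,z)=\v_{s,t}(\v_{t,t}(z))=\v_{s,t}(z)$. Computing $\partial_u\psi$ exactly as above gives
\[
\frac{\partial}{\partial u}\psi(u,z)= G\bigl(\psi(u,z),u\bigr)-\v_{s,u}'(\v_{u,t}(z))\,\v_{u,t}'(z)\,G(z,u).
\]
Hypothesis (2), applied with the pair $(u,t)$ in place of $(s,t)$, says $G(\v_{u,t}(z),u)=\v_{u,t}'(z)G(z,u)$; substituting and using the chain-rule identity $\v_{s,u}'(\v_{u,t}(z))\,\v_{u,t}'(z)=(\v_{s,u}\circ\v_{u,t})'(z)=\partial_z\psi(u,z)$ turns the right-hand side into $G(\psi(u,z),u)-\v_{s,u}'(\v_{u,t}(z))\,G(\v_{u,t}(z),u)$. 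Since $\partial_z\psi(u,z)=\v_{s,u}'(\v_{u,t}(z))\,\v_{u,t}'(z)$, this is exactly $G(\psi,u)-\partial_z\psi\cdot G(z,u)\cdot\v_{u,t}'(z)/\v_{u,t}'(z)$... more transparently: both terms cancel once one also invokes \eqref{main-eq} for the inner map, giving $\partial_u\psi(u,z)=0$ for a.e.\ $u$. Together with the requisite absolute continuity of $u\mapsto\psi(u,z)$ (which follows from EF3 and the product/chain structure, or directly from the integral bounds), this forces $\psi(u,z)=\v_{s,t}(z)$ for all $u\in[s,t]$, which is precisely the reversing property.

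The main obstacle, and the place where care is genuinely needed, is the measure-theoretic bookkeeping: the identities \eqref{main-eq} and \eqref{main-eq2} hold only for a.e.\ $u$, with the exceptional null set a priori depending on the frozen endpoints and on $z$. I would handle this by first fixing $z$ and rational $s,t$, choosing the null set to work simultaneously for all the (countably many) relevant pairs $(s,u),(u,t)$ via Proposition \ref{techdiff} (uniform-on-compacta convergence lets one pass from "for each $z$" to "for all $z$"), establishing the identity there, and then using continuity of $(s,t)\mapsto \v_{s,t}$ in the compact-open topology together with continuity in $t$ of both sides of (2) to extend to all real $s\leq u\leq t$. A second, minor point is justifying that $u\mapsto \v_{s,u}(\v_{u,t}(z))$ is absolutely continuous so that "derivative $=0$ a.e." implies "constant"; this is immediate from EF3 applied to each factor together with the local boundedness of $\v_{s,u}'$ on compacta (Cauchy estimates). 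Neither difficulty is deep, but both should be stated explicitly rather than glossed over.
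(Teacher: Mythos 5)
Your direction $(1)\Rightarrow(2)$ is exactly the paper's computation (differentiate the reversing identity in $u$, using \eqref{main-eq} for the outer factor, \eqref{main-eq2} for the inner one, and the chain rule), and your extra care about null sets and absolute continuity is legitimate, if left implicit in the paper. The skeleton of your $(2)\Rightarrow(1)$ also matches the paper, which sets $f(u)=\v_{s,t}(z)-\v_{s,u}\circ\v_{u,t}(z)$, quotes absolute continuity from \cite[Proposition 3.7]{BCM1}, and shows $f'(u)=0$ a.e. However, your decisive cancellation is mis-justified. After substituting hypothesis (2) for the pair $(u,t)$ you are left with $\partial_u\psi(u,z)=G(\v_{s,u}(w),u)-\v_{s,u}'(w)\,G(w,u)$, where $w=\v_{u,t}(z)$, and you claim this vanishes ``once one also invokes \eqref{main-eq} for the inner map''. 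It does not: \eqref{main-eq} has already been spent in computing $\partial_u\v_{s,u}$, and it gives no relation between $G(\v_{s,u}(w),u)$ and $\v_{s,u}'(w)G(w,u)$. That relation says precisely that $\v_{s,u}$ intertwines the vector field $G(\cdot,u)$ with itself, which is not a formal consequence of the Loewner ODEs (those hold for every evolution family, while this intertwining does not, e.g.\ for a field equal to $G_1$ on $[0,1)$ and $G_2$ on $[1,\infty)$ with non-commuting flows). What is needed is a second application of hypothesis (2), now with the pair $(s,u)$ in place of $(s,t)$, evaluated at time $u$ and at the point $w=\v_{u,t}(z)$; with that insertion the two terms cancel and the argument closes as in the paper.

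A related point you should make explicit: both uses of (2) in this direction occur at an endpoint time ($u$ is the left endpoint of $[u,t]$ and the right endpoint of $[s,u]$), whereas (2) is only an almost-everywhere statement for each fixed pair, so a priori the exceptional set could contain exactly these diagonal values. The approximation you sketch for the bookkeeping (a common null set for countably many rational pairs, then locally uniform continuity of $(s,t)\mapsto\v_{s,t}$, hence of $\v_{s,t}'$ by Weierstrass, plus continuity of $G(\cdot,u)$) is precisely what legitimizes these diagonal evaluations for a.e.\ $u$; invoke it here, not only in direction $(1)\Rightarrow(2)$. With these two repairs your proof is correct and essentially identical to the paper's.
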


\begin{proof}
Assume (1) is satisfied. Fixing $s,t$ and differentiating with
respect to $u$ the equation $\v_{s,t}=\v_{s,u}\circ \v_{u,t}$,
using \eqref{main-eq} and \eqref{main-eq2} we obtain for almost
every $u$
\begin{equation*}
\begin{split}
0  &=\frac{\de\v_{s,u}}{\de u}(\v_{u,t}(z))+\v_{s,u}'(\v_{u,t}(z))\frac{\de \v_{u,t}(z)}{\de u}
\\&=G(\v_{s,u}(\v_{u,t}(z)),u)-\v_{s,u}'(\v_{u,t}(z))\v_{u,t}'(z) G(z,u)\\
&=G(\v_{s,u}\circ \v_{u,t}(z), u)- (\v_{s,u}\circ \v_{u,t})'(z)G(z,u)\\
&=G(\v_{s,t}(z),u)-\v_{s,t}'(z)G(z,u).
\end{split}
\end{equation*}
Thus (2) holds.

Conversely, assume (2) holds. Fix $z\in \D$ and $0\leq s\leq
t<+\infty$. For $s\leq u\leq t$  let
$f(u):=\v_{s,t}(z)-\v_{s,u}\circ \v_{u,t}(z)$. Note that $f$ is
absolutely continuous by \cite[Proposition 3.7]{BCM1} and
$f(s)=0$. Differentiating $f$ with respect to $u$, by the
previous computations, we obtain that $f'(u)=0$ almost
everywhere. Thus $f\equiv 0$ and (1) holds.
\end{proof}

\begin{proposition}\label{rev-Gsplit}
Let $(\v_{s,t})$ be a reversing evolution family with
associated Herglotz vector field $G(z,t)$. Let $0\leq u\leq
v<+\infty$ be such that $\v_{u,v}\neq \id$. Assume that
$\v_{u,v}$ is  elliptic or hyperbolic. Then $G(z,t)$ is
splitting.
\end{proposition}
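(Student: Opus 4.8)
The plan is to prove, via Lemma~\ref{revinG}, that $[G(\cdot,u_1),G(\cdot,u_2)]\equiv 0$ for almost all $u_1,u_2\ge 0$, and then to apply Lemma~\ref{commutingbrac}. I would start with two reductions. If some element of $(\v_{s,t})$ is a hyperbolic automorphism of $\D$, then Proposition~\ref{coro-iper-grup} already gives that $G$ is splitting, so assume from now on that no element is a hyperbolic automorphism; then Proposition~\ref{comrev} supplies $\tau\in\oD$ that is the Denjoy--Wolff point of every $\v_{s,t}\neq\id$, with $\tau\in\D$ if $\v_{u,v}$ is elliptic and $\tau\in\partial\D$ if $\v_{u,v}$ is hyperbolic. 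Exactly one of the following holds: (A) the family contains an element $\psi_0$ that is hyperbolic or elliptic but not an automorphism; (B) every $\v_{s,t}$ is an automorphism, and then $\v_{u,v}$ is necessarily an elliptic automorphism with interior fixed point $\tau$. Indeed (A) holds whenever $\v_{u,v}$ is not an automorphism, and also whenever $\v_{u,v}$ is an elliptic automorphism but some element fails to be one (that element has Denjoy--Wolff point $\tau\in\D$, hence is elliptic and not an automorphism).

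The main device is this rereading of Lemma~\ref{revinG}: enlarging its exceptional null set by a routine argument with rational parameters and continuity, and combining it with H4 and Proposition~\ref{techdiff}, there is a full-measure set $\mathcal U\subset[0,+\infty)$ so that for each $u\in\mathcal U$ the vector field $G(\cdot,u)$ is an infinitesimal generator, with generated semigroup $(\phi^u_r)_{r\ge 0}$, and $G(\v_{s,t}(z),u)=\v_{s,t}'(z)\,G(z,u)$ for all $0\le s\le u\le t$ and all $z\in\D$. For such $u$ and any $s\le u\le t$ this forces $\v_{s,t}\circ\phi^u_r=\phi^u_r\circ\v_{s,t}$ for every $r\ge 0$: with $z$ fixed, both $r\mapsto\v_{s,t}(\phi^u_r(z))$ and $r\mapsto\phi^u_r(\v_{s,t}(z))$ are $\D$-valued solutions of the autonomous equation $\dot w=G(w,u)$ equal to $\v_{s,t}(z)$ at $r=0$ (for the first curve one uses the intertwining relation and the chain rule), hence they coincide by uniqueness for ODEs with holomorphic right-hand side.

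In case (A), pick $u_1<u_2$ in $\mathcal U$ and write $\psi_0=\v_{m,n}$ (choosing $\psi_0=\v_{u,v}$ if $\v_{u,v}$ is not an automorphism). Choose $s\le\min\{u_1,u_2,m\}$ and $t\ge\max\{u_1,u_2,n\}$, and set $\psi:=\v_{s,t}=\v_{n,t}\circ\v_{m,n}\circ\v_{s,m}$ by EF2. Dropping identity factors and using that the remaining ones all have Denjoy--Wolff point $\tau$, the chain rule at $\tau$ (the angular chain rule if $\tau\in\partial\D$) and the Schwarz--Pick inequality bound the multiplier of $\psi$ at $\tau$ in modulus by that of $\psi_0$, which is $<1$ since $\psi_0$ is not an automorphism. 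Hence $\psi\neq\id$ and $\psi$ is elliptic or hyperbolic and not an automorphism, so by Lemma~\ref{centro}.(2) its centralizer $C(\psi)$ is abelian. Since $s\le u_1\le t$ and $s\le u_2\le t$, the previous paragraph gives $\phi^{u_1}_r,\phi^{u_2}_\rho\in C(\psi)$ for all $r,\rho\ge 0$, so they commute; thus $[G(\cdot,u_1),G(\cdot,u_2)]\equiv 0$, and since $u_1,u_2$ range over $\mathcal U$, Lemma~\ref{commutingbrac} shows $G$ is splitting.

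In case (B), conjugating $\tau$ to the origin by a fixed automorphism (which preserves the splitting property), every $\v_{s,t}$ is a rotation $\v_{s,t}(z)=\mu(s,t)z$ with $|\mu(s,t)|=1$; from $\v_{0,t}=\v_{s,t}\circ\v_{0,s}$ one gets $\mu(s,t)=\theta(t)\theta(s)^{-1}$ with $\theta(t):=\mu(0,t)$ locally absolutely continuous (by EF3) and of modulus $1$. Substituting $\v_{s,t}(z)=\theta(t)\theta(s)^{-1}z$ into \eqref{main-eq} gives $G(w,t)=\dot\theta(t)\theta(t)^{-1}w$ for a.e.\ $t$ and all $w\in\D$; writing $\theta=e^{i\eta}$ this is $G(w,t)=\dot\eta(t)\,(iw)$, which is splitting with infinitesimal generator $\tG(z)=iz$ and $g(t)=\dot\eta(t)\in\R$, and $g\in L^d_{\sf loc}$ by H3. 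The genuinely delicate part of the whole argument is the dichotomy of the first paragraph together with case (A): one has to manufacture, out of the single hypothesis on $\v_{u,v}$, an element $\v_{s,t}$ of the family — with $s$ small and $t$ large — to which Lemma~\ref{centro}.(2) applies, so that the two semigroups fall into a common abelian centralizer; the degenerate all-rotations case (B) is then closed by the explicit computation.
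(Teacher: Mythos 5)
Your proposal is correct, but it takes a genuinely different route from the paper's own proof of Proposition \ref{rev-Gsplit}. After the same reductions (Proposition \ref{coro-iper-grup} for hyperbolic automorphisms, the rotation computation when every element is an elliptic automorphism, Proposition \ref{comrev} for the common Denjoy--Wolff point $\tau$), the paper argues directly on the vector field: it first finds a time $t_0$ at which the normalized boundary (or interior) limit of $G(\cdot,t_0)$ at $\tau$ is nonzero --- a step that rests on the boundary behaviour of infinitesimal generators and on the formula for $\v_{u,v}'(\tau)$ in terms of the Berkson--Porta data --- then forms the quotient $A(z)=G(z,t)/G(z,t_0)$, uses Lemma \ref{revinG} to get $A=A\circ\v_{u,v}$, iterates to $A=A\circ\v_{u,v}^{\circ n}$, and lets the (nontangential) convergence of orbits to $\tau$ force $A$ to be the constant $g(t)$. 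You instead turn the intertwining identity of Lemma \ref{revinG} into the statement that each frozen semigroup $(\phi^u_r)$ commutes, by ODE uniqueness, with every $\v_{s,t}$ with $s\le u\le t$; you then manufacture, via EF2 and the (angular) chain rule for multipliers at $\tau$, a single non-automorphic elliptic or hyperbolic element $\psi=\v_{s,t}$ spanning a long time interval, so that Cowen's/Heins' rigidity (Lemma \ref{centro}.(2)) makes $C(\psi)$ abelian, the frozen semigroups commute pairwise, and Lemma \ref{commutingbrac} yields splitting. Your route is softer: it needs no fine boundary analysis of $G$ at $\tau$, no product formula, and no Lemma \ref{commutafam}; on the other hand it leans on Cowen's centralizer theorem and, unlike the quotient argument, it would not extend to the parabolic setting of Theorem \ref{revcom2} without a zero-hyperbolic-step hypothesis (Lemma \ref{centro}.(3)). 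It is also closer in spirit to the indirect route (Theorem \ref{revcom} combined with Theorem \ref{maindue}) from which the paper explicitly wanted to distinguish this ``direct'' proof, although you replace the product formula there by the intertwining/ODE argument. If you write this up, do spell out the two steps you call routine: the enlargement of the exceptional null set in Lemma \ref{revinG} (which requires the joint continuity of $(s,t)\mapsto\v_{s,t}$ and of $\v_{s,t}'$ on compacta), and the chain rule for angular derivatives at the common boundary Denjoy--Wolff point in the hyperbolic case.
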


\begin{proof}
If $\v_{u,v}$ is a hyperbolic automorphism then the result
follows from Proposition \ref{coro-iper-grup}. In case
$\v_{s,t}\neq \id$ are all elliptic automorphisms then one can
argue  as in the proof of Theorem \ref{revcom}.

We can thus assume that $\v_{u,v}$ is not an automorphism.  Let
$\tau\in \oD$ be the Denjoy-Wolff point of $\v_{u,v}$. By
Proposition \ref{comrev}, it follows $\tau$ is the Denjoy-Wolff
point of $\v_{s,t}$ for all $0\leq s\leq t<+\infty$ such that
$\v_{s,t}\neq \id$. By \cite[Theorem 6.7]{BCM1} it follows then
that for almost every $t\geq 0$
\[
G(z,t)=(z-\tau)(\overline{\tau}z-1)p(z,t),
\]
with $\Re p(z,t)\geq 0$. In particular, by Berkson-Porta
formula, the semigroup generated by $G(\cdot,t)$ has
Denjoy-Wolff point $\tau$.

\noindent{\bf Claim.} There exists $t_0\geq 0$  such that
$G(\cdot,t_0)$ is an infinitesimal generator with
$G(z,t_0)\not\equiv 0$ and $p(\tau,t_0)\neq 0$ in case $\tau\in
\D$, or the angular limit
\[
\beta(t_0):=\angle\lim_{z\to \tau}\frac{G(z,t_0)}{z-\tau}
\]
exists and it is different from $0$ in case $\tau\in\de \D$.

In case $\tau\in \D$ this holds for any $t_0\geq 0$ such that
$G(z,t_0)\not\equiv 0$ is an infinitesimal generator with
Denjoy-Wolff point $\tau$ (namely for almost every $t_0\geq 0$
such that $G(z,t_0)\not\equiv 0$) for otherwise $p(\tau,t_0)=0$
which implies actually $p(z,t_0)\equiv 0$ and hence
$G(z,t_0)\equiv 0$.

In case $\tau\in\de \D$, by \cite[Theorem
1]{Contreras-Diaz-Pommerenke:Scand}  for almost every $t\geq 0$
it follows that
\begin{equation}\label{limGf}
\angle\lim_{z\to \tau} \frac{G(z,t)}{z-\tau}=\beta(t) \in (-\infty, 0].
\end{equation}
By hypothesis  $\v_{u,v}'(\tau)<1$ (in the sense of angular
limits). According to \cite[Theorem 7.1]{BCM1},
$\v_{u,v}'(\tau)=\exp(\lambda(s)-\lambda(t))$ with
\[
\lambda(t)=\int_{0}^{t}\left(  \angle\lim_{z\rightarrow\tau}\frac
{2|\tau-z|^{2}p(z,\xi)}{1-|z|^{2}}\right)  d\xi.
\]
Hence, since $z\to \tau$ non-tangentially, if it were
$\beta(t)=0$ for almost every $t\geq 0$ it would follows that
$\lambda\equiv 0$ and $\v_{u,v}'(\tau)=1$, a contradiction.
Thus the claim is proven.

Now let us fix $t\geq 0$  such that $G(\cdot,t)$ is an
infinitesimal generator with Denjoy-Wolff point $\tau$ (this
happens for almost every $t\geq 0$ such that $G(z,t)\not\equiv
0$). Consider the function
\[
A(z):=\frac{G(z,t)}{G(z,t_0)}.
\]
Then in case $\tau \in \D$
\begin{equation}\label{limA}
\lim_{z\to \tau}A(z)=\lim_{z\to \tau} \frac{G(z,t)}{G(z,t_0)}=\frac{p(\tau,t)}{p(\tau,t_0)}.
\end{equation}
While, in case $\tau\in \de \D$, by \eqref{limGf} and since
$\beta(t_0)<0$,
\begin{equation}\label{limA2}
\angle\lim_{z\to \tau}A(z)=\angle\lim_{z\to \tau} \frac{G(z,t)}{z-\tau}\frac{z-\tau}{G(z,t_0)}=\frac{\beta(t)}{\beta(t_0)}.
\end{equation}
Now the family is reversing, therefore by Lemma \ref{revinG} we
have
\[
A(z)=\frac{\v'_{u,v}(z)G(z,t)}{\v'_{u,v}(z)G(z,t_0)}=\frac{G(\v_{u,v}(z),t)}{G(\v_{u,v}(z),t_0)}=A(\v_{u,v}(z)).
\]
By induction then
\begin{equation}\label{A-v}
    A(z)=A(\v_{u,v}^{\circ n}(z))
\end{equation}
for all $z\in \D$. But $\lim_{n\to \infty}\v_{u,v}^{\circ
n}(z)=\tau$ being $\tau$ the  Denjoy-Wolff point of $\v_{u,v}$
and, in case $\tau\in \de \D$, the sequence $\{\v_{u,v}^{\circ
n}(z)\}$ converges to $\tau$ non-tangentially (see for instance
\cite{Br1}). Thus by \eqref{A-v} and either \eqref{limA} in
case $\tau\in \D$ or \eqref{limA2} in case $\tau\in\de\D$
\[
A(z)=\lim_{n\to \infty}A(\v_{u,v}^{\circ n}(z))=g(t)
\]
with $g(t):=\frac{p(\tau,t)}{p(\tau,t_0)}$ in case $\tau \in
\D$ and $g(t)=\frac{\beta(t)}{\beta(t_0)}$ in case
$\tau\in\de\D$. Thus $G(z,t)=g(t) G(z,t_0)$ for almost every
$t\geq 0$. Hence $G(z,t)$ is splitting.
\end{proof}

The previous proof can be adapted to the parabolic case in the
following way:

\begin{theorem}\label{revcom2}
Let $(\v_{s,t})$ be a reversing evolution family with common
Denjoy-Wolff point $\tau\in \de\D$. Let $G(z,t)$ be its
associated Herglotz vector field. Suppose that $G(\cdot,t)$ has
derivatives up to order three at $z=\tau$ for almost every
$t\in [0,+\infty)$. Then $G(z,t)$ is splitting.
\end{theorem}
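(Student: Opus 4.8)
The strategy is to mimic the argument of Proposition~\ref{rev-Gsplit}, but now that $\tau\in\de\D$ and the family may consist of parabolic maps (of possibly positive hyperbolic step), the angular limit $\angle\lim_{z\to\tau}G(z,t)/(z-\tau)=\beta(t)$ may vanish identically, so the quotient $A(z)=G(z,t)/G(z,t_0)$ need no longer have a finite nonzero boundary value. The point of assuming three derivatives at $\tau$ is precisely to replace the first-order boundary data by higher-order Taylor coefficients of $G(\cdot,t)$ at $\tau$. First I would fix, for almost every $t$, the expansion $G(z,t)=a_1(t)(z-\tau)+a_2(t)(z-\tau)^2+a_3(t)(z-\tau)^3+o(|z-\tau|^3)$ as $z\to\tau$; since $G(\cdot,t)$ is an infinitesimal generator with Denjoy--Wolff point $\tau$, one has $\tau a_1(t)\in(-\infty,0]$ and, when $a_1(t)=0$ (the parabolic case), the structure of parabolic generators forces the next nonvanishing coefficient to be of a prescribed type (this is the classical fact that a parabolic generator with fixed point $\tau$ looks like $(z-\tau)^2$ times a nonvanishing factor near $\tau$, cf. the Berkson--Porta formula and \cite{Contreras-Diaz-Pommerenke:Scand}).

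\smallskip
\noindent\textbf{Extracting a nonvanishing common factor.} Using Proposition~\ref{comrev}, all nontrivial $\v_{s,t}$ share the Denjoy--Wolff point $\tau$, and by \cite[Theorem 6.7]{BCM1} one gets $G(z,t)=(z-\tau)(\overline\tau z-1)p(z,t)$ with $\Re p\ge0$. I would then argue, exactly as in the Claim of Proposition~\ref{rev-Gsplit}, that there is a ``good'' time $t_0$ at which $G(\cdot,t_0)$ is a nontrivial generator whose relevant Taylor coefficient at $\tau$ (the first nonzero one among $a_1,a_2,a_3$) does not vanish; if the family is hyperbolic at some scale this is $a_1(t_0)$, and if it is purely parabolic it is $a_2(t_0)$ (nonvanishing of $a_2(t_0)$ for some $t_0$ follows from $\v_{u,v}\neq\id$ together with \cite[Theorem 7.1]{BCM1}, since $\lambda\equiv 0$ would force $\v_{u,v}=\id$). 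The key structural observation is that, because $G(\cdot,t_0)$ is a parabolic generator with fixed point $\tau$ of order exactly two, the function
\[
A(z):=\frac{G(z,t)}{G(z,t_0)}
\]
extends holomorphically across $\tau$ (or at least has a nontangential limit there), with $\angle\lim_{z\to\tau}A(z)=a_2(t)/a_2(t_0)=:g(t)$ in the parabolic case and $=a_1(t)/a_1(t_0)$ in the hyperbolic case; the three-derivative hypothesis guarantees enough regularity that the quotient is well-behaved at $\tau$ and that this limit exists.

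\smallskip
\noindent\textbf{Iterating the reversing identity.} Once $A$ is under control at $\tau$, the rest runs exactly as before: reversing plus Lemma~\ref{revinG} give $A(z)=A(\v_{u,v}(z))$, hence by induction $A(z)=A(\v_{u,v}^{\circ n}(z))$ for all $n$; since $\tau$ is the Denjoy--Wolff point of $\v_{u,v}$, we have $\v_{u,v}^{\circ n}(z)\to\tau$, and (because $\tau\in\de\D$) the approach is nontangential by \cite{Br1}. Passing to the limit yields $A(z)\equiv g(t)$, i.e. $G(z,t)=g(t)G(z,t_0)$ for almost every $t$, so $G$ is splitting with $\tG:=G(\cdot,t_0)$. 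The main obstacle, and the reason the extra hypothesis is imposed, is the middle step: showing that $A(z)=G(z,t)/G(z,t_0)$ has a finite nonzero nontangential limit at the boundary point $\tau$ when both numerator and denominator vanish there to second order. One must combine the two Taylor expansions, verify that the leading coefficient of $G(\cdot,t_0)$ genuinely does not vanish (using \cite[Theorem 7.1]{BCM1} to rule out the degenerate case), and check that the remainder terms $o(|z-\tau|^3)$ are controlled uniformly enough along nontangential approach --- this is where the assumption of derivatives up to order three is really used, rather than merely order two.
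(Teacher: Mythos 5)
Your skeleton (Taylor expansion of $G(\cdot,t)$ at $\tau$, the quotient $A(z)=G(z,t)/G(z,t_0)$, the reversing identity $A=A\circ\v_{u,v}$, iteration and passage to the limit along the orbit) is exactly the paper's, but the step on which the whole theorem hinges --- producing a time $t_0$ whose generator has a \emph{nonvanishing} leading Taylor coefficient at $\tau$ of \emph{minimal} order --- is handled incorrectly. First, your structural claim that a parabolic generator with Denjoy--Wolff point $\tau$ ``looks like $(z-\tau)^2$ times a nonvanishing factor near $\tau$'' is false: with $\tau=1$, the generator $G(z)=(z-1)^2\frac{1-z}{1+z}$ has $\Re\frac{1-z}{1+z}>0$ and vanishes to order three at $\tau$, so $a_2$ may well be zero for a nontrivial parabolic generator. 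Second, your appeal to \cite[Theorem 7.1]{BCM1} to get $a_2(t_0)\neq 0$ is a non sequitur: that theorem controls only $\v_{u,v}'(\tau)=\exp(\lambda(u)-\lambda(v))$, i.e.\ first-order boundary data, and in the parabolic case $\v_{u,v}'(\tau)=1$ and $\lambda\equiv 0$ are perfectly compatible with $\v_{u,v}\neq\id$ (the implication ``$\lambda\equiv 0\Rightarrow\v_{u,v}=\id$'' is only used in Proposition \ref{rev-Gsplit}, where $\v_{u,v}$ is hyperbolic). As written, nothing in your argument excludes that for almost every $t$ all three coefficients $a_1(t),a_2(t),a_3(t)$ vanish while $G(\cdot,t)\not\equiv 0$, in which case no admissible $t_0$ exists and $A$ cannot be normalized at $\tau$.

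The missing idea is a rigidity theorem: the paper invokes Shoikhet's Burns--Krantz type theorem for infinitesimal generators \cite{Sh}, which says that if $\lim_{z\to\tau}G(z,t)/(z-\tau)^3=0$ then $G(\cdot,t)\equiv 0$. This caps the vanishing order $\beta(t)$ at $3$ for every nontrivial good $t$; one then sets $\beta=\inf_t\beta(t)\in\{1,2,3\}$, picks $t_0$ realizing $\beta$, and the quotient $A$ has a finite (unrestricted) limit $C(t)$ at $\tau$ because the numerator vanishes to order at least $\beta$ while the denominator vanishes to order exactly $\beta$ --- this minimality over $t$ is also absent from your case split into ``$a_1$ or $a_2$''. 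A secondary point: your claim that the orbit $\v_{u,v}^{\circ n}(z)$ converges to $\tau$ nontangentially is wrong precisely in the case this theorem is designed for (parabolic maps of positive hyperbolic step have tangentially convergent orbits); the paper does not need nontangential approach here because the three-derivative hypothesis makes the limit of $A$ at $\tau$ unrestricted, so it is attained along any orbit.
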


\begin{proof}
If the evolution family is not trivial, we can find $t\geq 0$
such that $G(\cdot,t)\not\equiv 0$, $G(\cdot,t)$ is an
infinitesimal generator and it is differentiable up to the
third order at $\tau$. We claim that there exists $\beta(t)\in
\{1,2,3\}$ such that
\[
\lim_{z\to \tau} \frac{G(z,t)}{(z-\tau)^{\beta(t)}}\neq 0.
\]
Indeed, if $\lim_{z\to \tau} \frac{G(z,t)}{(z-\tau)^3}=0$ by
the Shoikhet version of Burns-Krantz type theorem for
semigroups \cite{Sh} it follows $G(z,t)\equiv 0$.

Let $\beta=\inf \beta(t)$, where $t$ is chosen among those
$t\geq 0$ such that $G(z,t)\not\equiv 0$, $G(z,t)$ is an
infinitesimal generator and $G(z,t)$ is differentiable up to
order three at $\tau$. Let $t_0$ be such that
$\beta(t_0)=\beta$.

As in the proof of Proposition \ref{rev-Gsplit}, for almost
every $t\geq 0$ fixed, we can define the function
$A(z):=G(z,t)/G(z,t_0)$. Thus
\[
\lim_{z\to \tau} A(z)=\lim_{z\to \tau} \frac{G(z,t)}{(z-\tau)^\beta}\frac{(z-\tau)^\beta}{G(z,t_0)}=C(t)
\]
exists. Now one can argue  exactly as in the proof of
Proposition \ref{rev-Gsplit}.
\end{proof}

{\bf Question:} is there an example of a reversing evolution
family which is not commuting?

\medbreak

Such a family, if exists, should be of parabolic type and
contains parabolic mappings of positive hyperbolic step,
moreover, the associated Herglotz vector field should not be
differentiable at the Denjoy-Wolff point.

\end{document}